\newtheorem{theorem}{Theorem}[section]
\newtheorem{coro}[theorem]{Corollary}
\newtheorem{lemma}[theorem]{Lemma}
\newtheorem{conj}[theorem]{Conjecture}
\title{Mader's conjecture for spiders}
\author{Yanmei Hong, Qinghai Liu}
\date{\today}
\begin{document}

\title{Mader's conjecture for graphs with small connectivity\thanks{Research supported by NSFC(11871015, 11401103) and FJSFC (2018J01665). }}
\author{Yanmei Hong\thanks{College of Mathematics and Computer
Science, Fuzhou University, Fuzhou, 350108, China. Email: yhong@fzu.edu.cn}\thanks{Corresponding author.} \quad Qinghai Liu  \thanks{Center for Discrete Mathematics, Fuzhou
University, Fuzhou, Fujian, 350002, Email: qliu@fzu.edu.cn }}

\date{}

\maketitle

\begin{abstract}
Mader conjectured that for any tree $T$ of order $m$, every $k$-connected graph $G$ with minimum degree at least $\lfloor\frac{3k}{2}\rfloor +m-1$ contains a subtree $T'\cong T$ such that $G-V(T')$ is $k$-connected. In this paper, we give a characterization for a subgraph to contain an embedding of a specified tree avoiding some vertex. As a corollary, we confirm Mader's conjecture for $k\leq3$.
\end{abstract}

\textbf{AMS Classification:} 05C75, 05C05, 05C35

\textbf{Keywords:}  connectivity, tree, embedding

\section{Induction}
The graphs considered in this paper are finite, undirected, and simple (no loops or multiple edges).
For a graph $G$, we use $V(G)$, $E(G)$, $\delta(G)$ and $\kappa(G)$ to denote its \textit{vertex set}, \textit{edge set}, \textit{minimum degree} and \textit{connectivity}, respectively.  The \textit{order} of a graph $G$ is the cardinality of its vertex set, denoted by $|G|$.  By $H\subseteq G$ we mean that $H$ is a subgraph of $G$, and we view any subset of vertices as a subgraph with no edges. For any $U\subseteq G$, the \textit{neighborhood} of $U$, denoted by $N_G(U)$, is the set of vertices in $V(G)-U$ adjacent to at least one vertex in $U$. If the graph $G$ is clear from the context, the reference to $G$ is usually omitted.
 For graph-theoretical terminologies and notation not defined here, we follow\cite{bondy}.  

The following well-known result due to Chartrand, Kaugars and Lick \cite{ckl}.
\begin{theorem}[Chartrand, \textit{et. al.}\cite{ckl}]
Every $k$-connected graph $G$ with  $\delta(G)\geq \lfloor  \frac {3k}{2} \rfloor $  has a vertex  $x $ such that $G-x$ is still $k$-connected.
\end{theorem}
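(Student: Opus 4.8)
The plan is to establish the contrapositive: if $\kappa(G)\ge k$ but $G-x$ fails to be $k$-connected for every $x\in V(G)$, then $\delta(G)\le\lfloor 3k/2\rfloor-1$. After disposing of trivialities — I may assume $|G|\ge k+2$ (the statement being degenerate otherwise) and $\kappa(G)=k$ (if $\kappa(G)\ge k+1$ any vertex works), and then, since deleting one vertex lowers connectivity by at most one, $\kappa(G-x)=k-1$ for \emph{every} $x$, i.e.\ $G$ is \emph{critically $k$-connected}; equivalently, through every vertex there passes a separating set of $G$ of size exactly $k$ — the task becomes: a critically $k$-connected graph has a vertex of degree at most $\lfloor 3k/2\rfloor-1$.

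Next I would reduce this to a statement about a smallest fragment. Call $F\subseteq V(G)$ a \emph{fragment} if $F\ne\emptyset\ne N(F)$ and $\overline F:=V(G)\setminus(F\cup N(F))\ne\emptyset$; in a $k$-connected graph every fragment satisfies $|N(F)|\ge k$. Among all size-$k$ separators $S$ and all components of $G-S$, pick a component $F$ of smallest order; put $S=N(F)$ (which then has size exactly $k$) and $\overline F=V(G)\setminus(F\cup S)$, and note $N(\overline F)=S$ and $|\overline F|\ge|F|$. If $|F|\le\lfloor k/2\rfloor$ we are done, since any $v\in F$ has all its neighbours in $(F\setminus\{v\})\cup S$, whence $d(v)\le(|F|-1)+k\le\lfloor k/2\rfloor-1+k=\lfloor 3k/2\rfloor-1$. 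So everything comes down to showing that the smallest fragment has order at most $\lfloor k/2\rfloor$.

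To prove this, suppose $|F|\ge\lfloor k/2\rfloor+1$. Fix $v\in F$ and, using critical $k$-connectivity, a size-$k$ separator $S_v$ with $v\in S_v$; let $P$ be a component of $G-S_v$ and $Q=V(G)\setminus(P\cup S_v)$, so $N(P)=N(Q)=S_v$ and $v\notin P\cup Q$. The two tools are (i) the submodularity inequality
\[
|N(A\cap B)|+|N(\overline A\cap\overline B)|\ \le\ |N(A)|+|N(B)|,\qquad\overline A:=V(G)\setminus(A\cup N(A)),
\]
valid for all $A,B\subseteq V(G)$ and proved by partitioning $N(A)$ and $N(B)$ according to which of $A,\overline A,N(A)$ (resp.\ $B,\overline B,N(B)$) each element lies in; and (ii) minimality of $|F|$, which forces every fragment with a $k$-element neighbourhood to have at least $|F|$ vertices. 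Applying (i) with $A=F$, $B=P$: if $F\cap P$ and $\overline F\cap Q$ are both nonempty they are fragments, their neighbourhoods have size $\ge k$ and hence $=k$, and (ii) then gives $F\subseteq P$ — impossible as $v\in F\setminus P$. Thus $F\cap P=\emptyset$ or $\overline F\cap Q=\emptyset$, and symmetrically $F\cap Q=\emptyset$ or $\overline F\cap P=\emptyset$; one checks similarly that no component of $G-S_v$ lies inside $F$. Running through the configurations of which of $F\cap P,F\cap Q,\overline F\cap P,\overline F\cap Q$ vanish, the ``crossing'' ones are excluded as above, and in each remaining one, one side ($F$, or $\overline F$, or a component of $G-S_v$) is swallowed by one of the two separators. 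In the crucial case $F\subseteq S_v$, deleting $F$ from $S_v$ exhibits a separator of $G-F=G[\overline F\cup S]$ of size $k-|F|\le\lceil k/2\rceil-1$; but $G[\overline F\cup S]$ is essentially $\lceil k/2\rceil$-connected — between two vertices of $\overline F$, $G$ has $k$ internally disjoint paths, and a path detouring through $F$ must enter and leave via $N(F)$, using two of the $k$ vertices of $S$, so at least $\lceil k/2\rceil$ of the paths avoid $F$ — a contradiction; the remaining cases are handled in the same spirit.

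I expect this last case analysis to be the main obstacle: the bookkeeping of the interaction of the nine sets $\{F,\overline F,S\}\times\{P,Q,S_v\}$, and in particular turning the ``$G-F$ is almost $\lceil k/2\rceil$-connected'' idea into a clean contradiction in every surviving configuration — the separations that involve vertices of $S$ rather than of $\overline F$ need extra care — is where the real work lies. The degenerate small-order cases should be checked separately at the outset.
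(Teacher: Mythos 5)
The paper does not prove this statement at all --- it is quoted as a known result of Chartrand, Kaugars and Lick --- so there is no proof of record to compare against; I can only assess your argument on its own terms. Your overall route is the standard (and correct) one: pass to the contrapositive, reduce to a critically $k$-connected graph, take a minimum fragment (atom) $F$ with $S=N(F)$, $|S|=k$, observe that $|F|\le\lfloor k/2\rfloor$ finishes the proof via $d(v)\le(|F|-1)+k$, and attack a separator $S_v$ through a vertex $v\in F$ with the submodularity inequality. Your exclusion of the ``crossing'' configurations is sound: if $F\cap P$ and $\overline F\cap Q$ are both nonempty they are fragments, submodularity forces both neighbourhoods to have size exactly $k$, and minimality of $|F|$ forces $F\subseteq P$, contradicting $v\in F\cap S_v$.

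The genuine gap is in the surviving configurations, and you have put your finger on it yourself without closing it. In the case $F\subseteq S_v$ your proposed contradiction --- that $S_v\setminus F$ is a separator of $G-F$ of size $k-|F|<\lceil k/2\rceil$ while $G-F$ is ``essentially $\lceil k/2\rceil$-connected'' because a path through $F$ spends two internal vertices in $S$ --- breaks down precisely when a component $P$ of $G-S_v$ satisfies $P\cap\overline F=\emptyset$, i.e.\ $P\subseteq S$. Then you cannot choose both endpoints of your path system in $\overline F$, a path from a vertex of $S$ may enter $F$ directly and spend only \emph{one} internal vertex in $S$, and the count degrades to ``at least $2$ paths avoid $F$,'' which proves nothing. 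The same defect afflicts the non-crossing cases where $\overline F\subseteq S_v$ or a component of $G-S_v$ is absorbed by $S$. The repair is to abandon the connectivity-of-$G-F$ reformulation and run the \emph{counting} form of submodularity in every surviving configuration: whenever $F\cap P\ne\emptyset$ (resp.\ $P\subseteq S$, using that $P$ is then a fragment of size $\ge|F|$ by atomicity) one gets $|S\cap P|\ge|\overline F\cap S_v|\ge|F|$ or directly $|S\cap P|\ge|F|$, and symmetrically $|S\cap Q|\ge|F|$, whence $k=|S|\ge|S\cap P|+|S\cap Q|\ge 2|F|$ in each case. As written, your proposal is an accurate map of the proof with the decisive case analysis left unexecuted and the one case you do treat resting on an argument that fails in a subcase you would actually encounter.
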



Fujita and Kawarabayashi \cite{kk} studied the question that whether one can choose the minimum degree
$\delta(G)$ of a $k$-connected graph $G$ large enough (independent of the number of vertices $|G|$ of $G$) to contain adjacent vertices $u\neq v$ such that $G-\{u,v\}$ is still $k$-connected, and gave a positive answer. Also, in the same paper, they conjectured that one can find a function $f_k(m)$ such that every $k$-connected graph $G$ with $\delta(G)\geq \lfloor\frac{3k}{2}\rfloor+f_k(m)-1$ contains a connected subgraph $W$ of order $m$ such that $G-V(W)$ is still $k$-connected. They also gave examples in \cite{kk} to show that $f_k(m)\geq m$  for all positive integers $k$, $m$. In \cite{mader1}, Mader confirmed the conjecture by showing that $f_k(m)=m$. In fact, Mader showed the connected subgraph $W$ can be chosen to be a path.
Based on the result, Mader conjectured that the $W$ can be chosen to be every tree of order $m$ and
made the following conjecture. A \textit{graph isomorphism} from a graph $G$ to a graph $H$, written $\phi:G\to H$, is a mapping $\phi: V(G)\to V(H)$ such that $uv\in E(G)$ if and only if $\phi(u)\phi(v)\in E(H)$.

  \begin{conj}[Mader \cite{mader1}]\label{conj-mader}
 For any tree $T$ of order $m$, every $k$-connected graph $G$ with $\delta(G)\geq \lfloor \frac{3k}{2}\rfloor +m-1$ contains a subtree $T'\cong T$ such that $G-V(T')$ is $k$-connected.
  \end{conj}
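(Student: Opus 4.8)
The plan is to prove Conjecture~\ref{conj-mader} by induction on $m=|T|$. When $m=1$ everything is the base case: a subtree $T'$ on one vertex is just a vertex $x$, the requirement that $G-V(T')$ be $k$-connected is the requirement that $G-x$ be $k$-connected, and $\lfloor 3k/2\rfloor+m-1=\lfloor 3k/2\rfloor$, so the statement is exactly the theorem of Chartrand \textit{et al.}\ \cite{ckl} quoted above. Hence all the work lies in the inductive step.

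Let $m\ge2$. Root $T$ at an arbitrary vertex, pick a leaf $\ell$ of $T$ with neighbour $p$, and set $T_0:=T-\ell$, a tree of order $m-1$. Because $\delta(G)\ge\lfloor 3k/2\rfloor+m-1\ge\lfloor 3k/2\rfloor+(m-1)-1$, the induction hypothesis yields an embedding $\phi\colon T_0\to G$, with image $T_0'$, such that $J:=G-V(T_0')$ is $k$-connected. It then suffices to produce a vertex $v\in N_G(\phi(p))\setminus V(T_0')$ for which $J-v$ is still $k$-connected: extending $\phi$ by $\ell\mapsto v$ turns $T_0'$ together with the edge $\phi(p)v$ into a subtree of $G$ isomorphic to $T$ --- a subtree need not be induced, so it does not matter whether $v$ has other neighbours in $T_0'$ --- and what is left after deleting it is precisely $J-v$. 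For the arithmetic, every vertex of $J$ has lost at most $|V(T_0')|=m-1$ neighbours, so $\delta(J)\ge\lfloor 3k/2\rfloor$, while $\phi(p)$ has at most $m-2$ neighbours inside $T_0'$ and hence at least $\lfloor 3k/2\rfloor+1$ neighbours in $J$. (In other words, the single unit by which the hypothesis on $\delta(G)$ exceeds what the induction hypothesis needs is what lifts $\delta(J)$ to the Chartrand--Kaugars--Lick threshold and grants $\phi(p)$ one spare neighbour in $J$.)

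The point where this stalls for general $k$ --- and, I expect, the main obstacle --- is the production of $v$, which does not follow from the degree bounds alone. For instance, already with $k=1$, let $J$ be a path on four vertices and let $w$ be an extra vertex joined to the two interior vertices of $J$: then $J$ is $k$-connected, $\delta(J)=1=\lfloor 3k/2\rfloor$, and $w$ has $\lfloor 3k/2\rfloor+1=2$ neighbours in $J$, but both are cut-vertices of $J$, so no admissible $v$ exists. Consequently the induction must be carried with a sharper hypothesis recording how $T_0'$, and especially $\phi(p)$, lies relative to the small separators of $G-V(T_0')$; equivalently --- and this is the route the abstract announces --- one proves a self-contained characterisation of when a subgraph $H$ contains an embedded copy of a prescribed tree that avoids a prescribed vertex, and then deduces the conjecture from that characterisation together with the theorem of Chartrand \textit{et al.}\ \cite{ckl}. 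Proving the characterisation is the real work: assuming $\kappa(J)=k$ (otherwise any neighbour of $\phi(p)$ will do), it comes down to analysing the fragment structure of $J=G-V(T_0')$, using that each component of $J$ minus a minimum separator has order at least $\delta(J)-k+1\ge\lfloor k/2\rfloor+1$ and that two minimum separators can cross only in restricted ways, so that only finitely many local configurations can block the extension. For $k\le3$ these configurations can be enumerated and each killed by hand, which gives the conjecture in those cases; for larger $k$ it is precisely their proliferation that defeats the method, and the conjecture remains open.

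I would first run the whole scheme for $k=1$, where $\delta(G)\ge m$, only connectedness of $G-V(T')$ is at stake, and the separators are single cut-vertices: one already sees there that the induction has to be reinforced, the natural fix being to embed $T$ deep inside an end-block $B$ of $G$ (whose non-cut-vertices retain all their $G$-neighbours, so that $|B|\ge m+1$ and $B$ minus its cut-vertex is dense enough to contain a copy of $T$) in such a way that its deletion neither disconnects $B$ nor isolates the cut-vertex. The cases $k=2$ and $k=3$ then follow the same template with minimum separators of size $k$ in place of cut-vertices. Finally, it may be convenient to dispose first of the case in which $T$ is a spider, i.e.\ a subdivided star: then the delicate extensions cluster around the unique branch vertex, all the others being carried out along paths of $T$.
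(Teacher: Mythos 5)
There is a genuine gap, and you acknowledge it yourself: everything after the base case is a plan, not a proof. Your induction on $m$ correctly reduces the problem to finding a neighbour $v$ of $\phi(p)$ outside $T_0'$ with $J-v$ still $k$-connected, your degree arithmetic is right, and your $k=1$ counterexample (the path $abcd$ plus $w$ adjacent to $b,c$) validly shows that the Chartrand--Kaugars--Lick threshold plus one spare neighbour does not suffice. But the repair you then gesture at --- reinforcing the inductive hypothesis, analysing fragments and crossing minimum separators of $J$, enumerating ``finitely many local configurations'' for $k\le 3$ --- is never carried out, and it is precisely where all the difficulty lives. (Also note that the statement is a conjecture that remains open for $k\ge 4$; the paper itself only establishes the cases $k=2,3$, so no argument for general $k$ could be complete.)

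It is worth recording that the paper's route is not an induction on $m$ at all, and its ``characterization of embeddings avoiding a vertex'' is not a separator analysis of $J$. Instead, one fixes an extremal embedding: for $k=2$, a copy $T$ of $T_0$ chosen so that the largest block $B$ of $G-V(T)$ is as large as possible (for $k=3$, so that a maintained subdivision $B$ of a simple $3$-connected graph has as many branch vertices as possible). The key tool, Lemma \ref{lem-main}, says that for disjoint $T,B,H$ covering $G$ one can either conclude $N_G(H)\cap V(T)=\emptyset$, or bound $|N_G(v)\cap V(B)|$ for all $v$, or \emph{re-embed} $T_0$ inside $H\cup T$ avoiding some vertex $v$ with many neighbours in $B$ --- whereupon $B\cup\{v\}$ lies in a block of the new complement, contradicting extremality. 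That re-embedding is done greedily via the degree-counting Lemmas \ref{lem-tree3}--\ref{lem-leaf} (choosing edges $w_1w_2$, $w_3r$ minimizing rooted subtrees), not by classifying how minimum separators of $G-V(T_0')$ cross. Your end-block idea for $k=1$ is in the right spirit (it is essentially Diwan--Tholiya), but scaling it to $k=2,3$ requires this extremal-plus-re-embedding machinery, none of which is present in your write-up.
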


  Concerning to this conjecture, Mader also proved that a lower bound on the minimum degree which is independent of the order of $G$ indeed exists for any $m$ and $k$.

  \begin{theorem}[Mader \cite{mader2}]
  For any tree $T$ of order $m$, every $k$-connected graph $G$ with $\delta(G)\geq 2(k-1+m)^2+m-1$ contains a subtree $T'\cong T$ such that $G-V(T')$ is $k$-connected.
  \end{theorem}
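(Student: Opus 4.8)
The plan is to embed $T$ into a region of $G$ that is ``deep'' enough to be immune to all small vertex cuts; the bound $2(k-1+m)^2+m-1$ should emerge as the total cost of descending to such a region -- roughly $k+m$ nested steps, each absorbing roughly $k+m$ units of degree.

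\emph{Reductions.} If $\kappa(G)\ge k+m$ we are done immediately: since $\delta(G)\ge m-1$, embedding $T$ greedily one leaf at a time yields a copy $T'\cong T$ (when a leaf is added at an already--placed vertex $x$, at most $m-2$ of $x$'s neighbours are used, so one is free), and $G-V(T')$ is $(\kappa(G)-m)$--connected, hence $k$--connected, as it still has at least $k+1$ vertices. So we may assume $k\le\kappa(G)\le k+m-1$. Next I would record the clean reformulation: $G-V(T')$ is $k$--connected if and only if $V(T')$ is contained in no vertex cut of $G$ of size at most $m+k-1$. Indeed $G-V(T')$ fails to be $k$--connected exactly when $V(T')\cup U$ separates $G$ for some $U$ with $U\cap V(T')=\emptyset$ and $|U|\le k-1$ (the count $|V(G)|\ge\delta(G)+1>k+m+1$ rules out the trivial obstruction), and such sets $V(T')\cup U$ are precisely the cuts of $G$ of size $\le m+k-1$ containing $V(T')$. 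Finally, the ``fat fragment'' fact: if $A$ is a fragment of $G$ (so $N_G(A)$ is a minimum cut and $V(G)\setminus(A\cup N_G(A))\ne\emptyset$), then every vertex of $A$ has all its neighbours in $A\cup N_G(A)$, whence $|A|\ge\delta(G)+1-\kappa(G)$, which under our hypothesis is far larger than $m+k$; the same holds with $m+k-1$ replacing $\kappa(G)$ for cuts of size up to $m+k-1$.

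\emph{The descent.} Starting from $R:=V(G)$ with empty ``boundary'', I would iterate the following. Given a region $R$ with a boundary $\partial R$ of size $O(m+k)$ such that every vertex of $R$ has all its $G$--neighbours inside $R\cup\partial R$, check whether some cut $W$ of $G$ with $|W|\le m+k-1$ isolates a proper nonempty part of $R$, i.e.\ some component $C$ of $G-W$ satisfies $\emptyset\ne C\cap R\subsetneq R\setminus W$. If so, replace $R$ by $C\cap R$ and $\partial R$ by its new neighbourhood, which one checks is contained in $W\cup\partial R$ -- this doubling $|\partial R^{\mathrm{new}}|\le|W|+|\partial R|$ is the source of the leading constant $2$. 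The regions stay fat ($|R|\ge\delta(G)+1-O(m+k)\gg m$), and submodularity of the cut function together with the boundedness of the cut sizes should let one run the descent so that it terminates after $O(m+k)$ steps with boundaries of size $O(m+k)$ throughout. At the terminal region $R^{\ast}$, no cut of size $\le m+k-1$ isolates a proper part of $R^{\ast}$, and $G[R^{\ast}]$ has minimum degree $\ge\delta(G)-O(m+k)\ge m-1$; so I would embed $T$ greedily inside $R^{\ast}$, keeping $V(T')$ away from $\partial R^{\ast}$, and then argue that, because $R^{\ast}$ admits no such internal cut, $V(T')$ is contained in no cut of $G$ of size $\le m+k-1$. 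By the reformulation above, $G-V(T')$ is $k$--connected.

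\emph{The obstacle.} The technical heart is exactly this descent bookkeeping -- bounding the number of steps and keeping the boundaries of size $O(m+k)$ -- which is where the two powers of $k+m$ enter. Working globally is essential, because the naive alternative, namely inducting on $m$ by ``deleting a leaf $v$ of $T$ at $u$, finding a deletable copy of $T-v$, and re--attaching $v$ at the image $u'$ of $u$'', cannot work: the local statement it needs -- that in a $k$--connected graph of large minimum degree every vertex has a neighbour whose deletion preserves $k$--connectivity -- is false for arbitrarily large minimum degree. For instance, take a $(k-1)$--set $S$, arbitrarily many and arbitrarily large cliques $P_1,\dots,P_N$ each completely joined to $S$, and one extra vertex $w$ joined to exactly one vertex $a_i$ of each $P_i$; the result is $k$--connected with arbitrarily large minimum degree, yet deleting any neighbour $a_i$ of $w$ drops the connectivity to $k-1$ (then $S$ alone separates $P_i-a_i$ from the rest), so no neighbour of $w$ is usable. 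If one nevertheless wishes to wrap an induction on $m$ around the descent, the base case $m=1$ is the Chartrand--Kaugars--Lick theorem, whose hypothesis $\delta(G)\ge\lfloor 3k/2\rfloor$ is comfortably implied by $\delta(G)\ge 2k^{2}$.
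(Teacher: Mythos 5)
First, a point of comparison: the paper does not prove this statement at all --- it is quoted as a known theorem from Mader's paper \cite{mader2} --- so there is no in-paper argument to measure your proposal against. Judged on its own, what you have written is a strategy outline rather than a proof, and the hole sits exactly where you yourself place it. The descent is never carried out. You exhibit no potential function bounding the number of refinement steps; a priori the loop ``find a cut $W$ with $|W|\le m+k-1$ splitting $R$, pass to $C\cap R$'' can repeat $\Omega(|G|)$ times, by which point $\partial R$ has unbounded size and the claims $|R|\ge\delta(G)+1-O(m+k)$ and $\delta(G[R^\ast])\ge m-1$ are lost. Your bookkeeping is also internally inconsistent: you assert boundaries of size $O(m+k)$ throughout, while each of the (unjustified) $O(m+k)$ steps may add up to $m+k-1$ new boundary vertices, which already forces $O\bigl((m+k)^2\bigr)$; and the appeal to ``submodularity of the cut function'' is never turned into an inequality. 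Since this bookkeeping is precisely where the two factors of $(k-1+m)$ in the degree bound must originate, the theorem is not established.

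There is a second, more local gap at the terminal step. Even granting a region $R^\ast$ in which no cut of size at most $m+k-1$ isolates a proper nonempty part, the deduction that no such cut $W$ can contain $V(T')$ is incomplete. Suppose $V(T')\subseteq W$ and some component $C$ of $G-W$ satisfies $C\cap R^\ast=\emptyset$ but $C\cap\partial R^\ast\neq\emptyset$. Then $W$ does not isolate a proper part of $R^\ast$, so the terminal property is not violated; and since vertices of $T'$ may have neighbours in $\partial R^\ast\cap C$, you cannot conclude $N_G(C)\subseteq W\setminus V(T')$ and hence cannot derive a contradiction with $\kappa(G)\ge k$. Repairing this requires embedding $T'$ at distance at least $2$ from $\partial R^\ast$, which in turn demands degree control one more layer into $R^\ast$ --- again a piece of the quantitative work you have deferred. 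On the positive side, your opening reductions (the case $\kappa(G)\ge k+m$, the reformulation via cuts of size at most $m+k-1$ containing $V(T')$, and the fragment-size bound) are correct, and your example showing that the naive leaf-by-leaf induction must fail --- a $(k-1)$-set $S$ joined to many large cliques $P_i$, with an extra vertex $w$ attached to one vertex of each $P_i$ --- is sound and genuinely illuminating; but these are the easy parts of the argument.
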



  Until now, Mader's conjecture has been shown for $k=1$ by Diwan and Tholiya \cite{dt}. For $k=2$, only some classes of trees have been verified. In \cite{tian1, tian2},  stars, double-stars, path-stars, and path-double-stars are verified due to Tian, Lai, Xu and Meng. In \cite{lp1}, trees with diameter at most 4 are verified due to Lu and Zhang. In \cite{ho}, trees with small internal vertices and quasi-monotone caterpillars are verified due to Hssunuma and Ono.
  Very recently, caterpillars and spiders are verified due to Hong, Liu, Lu and Ye \cite{hlly}.
  All the methods are concerned with the structure of the trees.
  In this paper, we develop a general way that works for arbitrary trees and confirm Mader's conjecture for all classes of trees when $k=2$ or 3. Our method might have some potential generalization into $k$-connected case.

  We conclude this section with some notations about trees.
 A rooted tree is a tree with a specific vertex, which is call \textit{root}.
  An \textit{$r$-tree} is a rooted tree with root $r$. For an $r$-tree $T$ and a vertex $v\in V(T)$, $rTv$ is defined as the unique path from $r$ to $v$ in $T$. Every vertex in the path $rTv$ is called an \textit{ancestor} of $v$, and each vertex of which $v$ is an ancestor is a \textit{descendant} of $v$. An ancestor or descendant of a vertex is \textit{proper} if it is not the vertex itself. The immediate proper ancestor of a vertex $v$ other than the root is its \textit{parent} and the vertices whose parent is $v$ are its \textit{children}. A \textit{leaf} of a rooted tree is a vertex with no children. The set of leaves of an $r$-tree $T$ is denoted by $Leaf(T)$. For a vertex $v$ in an $r$-tree $T$, the \textit{subtree} of $T$, denoted by $T_v$, is the tree induced by all its descendant and with root $v$.


\section{Preliminaries}

By greedy strategy, it is easy to see that every graph with minimum degree at least $m-1$ contains every tree of order $m$. In fact, one may obtain a slightly general form as following two lemmas.

\begin{lemma}[Hasunuma and Ono \cite{ho}]\label{lem-tree}
  Let $T$ be a tree of order $m$ and $T_1$ a subtree of $T$. If a graph $G$ contains a subtree $T_1'\cong T_1$, denoting by $\phi: T_1\to T_1'$ the isomorphism, such that $d_G(v)\geq m-1$ for any $v\in V(G)\setminus \{\phi(x)\mid x\in V(T_1), N_T(x)\subseteq V(T_1)\})$, then $G$ contains a subtree $T'\cong T$ such that $T_1'\subseteq T'$.
\end{lemma}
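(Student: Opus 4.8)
The plan is to prove this by induction on $m-|V(T_1)|\geq 0$, viewing it as a refinement of the greedy fact that $\delta(G)\geq m-1$ forces every tree of order $m$. In the base case $|V(T_1)|=m$; a connected spanning subgraph of the tree $T$ can only be $T$ itself, so $T_1=T$ and then $T':=T_1'$ already satisfies the conclusion. So assume $|V(T_1)|<m$ and that the statement holds for all subtrees of $T$ strictly larger than $T_1$.

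For the inductive step I would enlarge $T_1$ by a single vertex and extend the embedding greedily. Since $T_1$ is a proper subtree of the tree $T$, there is an edge $xy\in E(T)$ with $x\in V(T_1)$ and $y\in V(T)\setminus V(T_1)$; let $T_2$ be the subtree of $T$ with $V(T_2)=V(T_1)\cup\{y\}$ and $E(T_2)=E(T_1)\cup\{xy\}$, so $|V(T_2)|=|V(T_1)|+1$. Because $x$ has the neighbour $y$ outside $V(T_1)$ we have $N_T(x)\not\subseteq V(T_1)$, and since $\phi$ is injective on $V(T_1)$ this means $\phi(x)$ is not one of the exceptional vertices $\{\phi(u)\mid u\in V(T_1),\ N_T(u)\subseteq V(T_1)\}$; hence the hypothesis gives $d_G(\phi(x))\geq m-1$. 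As $V(T_1')\setminus\{\phi(x)\}$ has only $|V(T_1)|-1\leq m-2$ vertices, $\phi(x)$ has a neighbour $z\in V(G)\setminus V(T_1')$. Extending $\phi$ by $\phi(y):=z$ produces a subtree $T_2'\subseteq G$ with $V(T_2')=V(T_1')\cup\{z\}$ and $T_2'\cong T_2$, with $T_1'\subseteq T_2'$.

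It then remains to verify that $G$, $T$, $T_2$, $T_2'$ satisfy the hypothesis of the lemma, so that the induction hypothesis applies to yield a subtree $T'\cong T$ with $T_2'\subseteq T'$ (and hence $T_1'\subseteq T'$), finishing the proof. The one point requiring care — and the step I expect to be the main obstacle, mild as it is — is that the ``exceptional set'' behaves correctly under the enlargement: I claim it only grows. Indeed, if $u\in V(T_1)$ with $N_T(u)\subseteq V(T_1)$, then $u\in V(T_2)$ and $N_T(u)\subseteq V(T_1)\subseteq V(T_2)$, so $\phi(u)$ is exceptional for $T_2$ as well. Consequently $\{v\in V(G):d_G(v)<m-1\}$, which by hypothesis lies in the exceptional set for $(T_1,T_1')$, also lies in the (possibly larger) exceptional set for $(T_2,T_2')$; that is exactly the degree condition needed to invoke induction on $T_2$. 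Everything else is a single greedy one-vertex extension justified by the degree bound on $\phi(x)$.
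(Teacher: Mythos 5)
Your proof is correct: the one-vertex greedy extension, together with the observation that the exceptional set $\{\phi(x)\mid x\in V(T_1),\ N_T(x)\subseteq V(T_1)\}$ can only grow when $T_1$ is enlarged (so the degree hypothesis is inherited by $(T_2,T_2')$), is exactly the standard argument, and all the edge cases (the new vertex $z$ lying outside the old exceptional set, the counting $d_G(\phi(x))\geq m-1>|V(T_1')|-1$) check out. The paper states this lemma as a quoted result of Hasunuma and Ono without giving a proof, describing it only as following ``by greedy strategy,'' which is precisely the argument you have carried out, so your proposal matches the intended approach.
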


\begin{lemma}[Hasunuma and Ono \cite{ho}]\label{lem-tree2}
    Let $T$ be a tree of order $m$, $U\subseteq V(T)$ and $S$ be a subtree obtained from $T$ by deleting some leaves adjacent to a vertex in $U$. If a graph $G$ contains a subtree $S' \cong S$ such that $d_G(u) \ge m-1$ for any $u\in U' = \{\phi(x):x \in U\}$ for an isomorphism $\phi$ from $S$ to $S'$, then $G$ contains a subtree $T' \cong T$ such that $S' \subseteq T'$.
\end{lemma}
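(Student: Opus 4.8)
The plan is to reduce Lemma~\ref{lem-tree2} to Lemma~\ref{lem-tree} by choosing an appropriate intermediate subtree $T_1$ of $T$ with $S \subseteq T_1 \subseteq T$. Since $S$ is obtained from $T$ by deleting some set $L$ of leaves each adjacent to a vertex of $U$, the vertices of $T$ that have a neighbour outside $V(S)$ are exactly the vertices of $U$ incident to a deleted leaf (call this set $U_0 \subseteq U$) together with the deleted leaves themselves (each leaf in $L$ has its unique neighbour in $U_0 \subseteq V(S)$, so in fact it is only $U_0$ whose neighbourhood is not contained in $V(S)$). The idea is to take $T_1 = S$ and apply Lemma~\ref{lem-tree} directly: the exceptional set in Lemma~\ref{lem-tree} is $\{\phi(x) : x \in V(S),\ N_T(x) \subseteq V(S)\}$, and a vertex $x \in V(S)$ fails this condition precisely when $x$ has a neighbour among the deleted leaves, i.e.\ precisely when $x \in U_0$. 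Hence the set of vertices of $G$ on which we need the degree bound $d_G(\cdot) \ge m-1$ is exactly $V(G) \setminus \{\phi(x) : x \in U_0\}$, which contains $V(G) \setminus \{\phi(x) : x \in U\} \supseteq V(G) \setminus U'$; combined with the hypothesis $d_G(u) \ge m-1$ for all $u \in U'$, we would in fact get the degree bound everywhere. Wait — that over-shoots; more carefully, the hypothesis only gives the bound on $U'$, and Lemma~\ref{lem-tree} needs it on $V(G) \setminus \{\phi(x):x\in U_0\}$, so I need to check these two sets cover $V(G)$: indeed $\{\phi(x):x\in U_0\} \subseteq \{\phi(x):x\in U\} = U'$, so $V(G) \setminus \{\phi(x):x\in U_0\} \supseteq V(G)\setminus U'$ and together with $U'$ they exhaust $V(G)$, giving the bound at every vertex of $G$, in particular on the required set. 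So Lemma~\ref{lem-tree} applies with $T_1 = S$ and yields a subtree $T' \cong T$ with $S' = T_1' \subseteq T'$, as desired.

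The key steps, in order, are: (1) identify $L = V(T) \setminus V(S)$ and note each vertex of $L$ is a leaf of $T$ adjacent to some vertex of $U$; (2) show that for $x \in V(S)$ the condition $N_T(x) \subseteq V(S)$ fails iff $x$ is the parent (in the deletion) of some leaf in $L$, and that such $x$ lies in $U$; (3) conclude $\{\phi(x) : x \in V(S),\ N_T(x) \not\subseteq V(S)\} \subseteq U'$, so the degree hypothesis on $U'$ supplies exactly what Lemma~\ref{lem-tree} needs; (4) invoke Lemma~\ref{lem-tree} with $T_1 := S$, $T_1' := S'$, obtaining $T' \cong T$ with $S' \subseteq T'$.

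The only genuinely delicate point — and the step I would write out most carefully — is the bookkeeping in step (2)–(3): one must be sure that \emph{every} vertex $x \in V(S)$ witnessing a neighbour outside $V(S)$ is accounted for by $U$, which is immediate from the phrase ``deleting some leaves adjacent to a vertex in $U$'', but it relies on reading ``adjacent to a vertex in $U$'' as ``whose neighbour lies in $U$'' (true since a deleted vertex is a leaf, hence has a unique neighbour). There is no real combinatorial obstacle here; the lemma is essentially a repackaging of Lemma~\ref{lem-tree}, and indeed Hasunuma and Ono state it as a convenience. If one preferred not to cite Lemma~\ref{lem-tree}, the alternative is a direct greedy embedding: embed $S'$ first, then add back the deleted leaves one at a time, each time choosing for a leaf $\ell$ with parent $u \in U$ an as-yet-unused neighbour of $\phi(u)$ in $G$ — possible because $d_G(\phi(u)) \ge m-1$ and at most $m-2$ other vertices of $T'$ have been placed — but the reduction above is cleaner.
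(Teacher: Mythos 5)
The paper itself gives no proof of Lemma \ref{lem-tree2} --- it is quoted from Hasunuma and Ono --- so your argument has to stand on its own, and the main route you propose (reduction to Lemma \ref{lem-tree} with $T_1=S$) does not work. Lemma \ref{lem-tree} demands $d_G(v)\ge m-1$ for every $v$ in $V(G)\setminus \{\phi(x)\mid x\in V(T_1),\ N_T(x)\subseteq V(T_1)\}$, and this set contains in particular \emph{every vertex of $G$ outside $V(S')$}. The hypothesis of Lemma \ref{lem-tree2} supplies the degree bound only on $U'\subseteq V(S')$ and says nothing about vertices outside $S'$. Your covering step is a non sequitur: the fact that $U'$ and $V(G)\setminus\{\phi(x):x\in U_0\}$ together cover $V(G)$ does not let you conclude the bound holds on the latter set; for the reduction to go through you would need the set on which Lemma \ref{lem-tree} requires the bound to be \emph{contained in} the set on which the bound is assumed, and here it is essentially the opposite (the required set is almost all of $V(G)$, the assumed set is a handful of vertices of $S'$). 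Lemma \ref{lem-tree2} is precisely the variant one needs when there is no degree control outside the embedded subtree, so it cannot be a formal consequence of Lemma \ref{lem-tree} in the way you describe.

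The argument you relegate to your final sentence is the correct proof and should be the main one. Each deleted vertex $\ell$ is a leaf of $T$, so its unique neighbour $u$ lies in $U$; re-attach the deleted leaves one at a time, and when attaching the $i$-th leaf the partial tree has at most $m-2$ vertices other than $\phi(u)$, while $d_G(\phi(u))\ge m-1$, so $\phi(u)$ retains at least one neighbour of $G$ not yet used, which you take as the image of $\ell$. Writing out that count carefully gives a complete and self-contained proof; I would delete the reduction entirely.
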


By merging the above two lemmas, we may get the following more general form which will be used to construct trees in our proof.

\begin{lemma}\label{lem-tree3}
 Let $T_0$ be a tree of order $m$ and $T'$ a subtree of $T_0$.
 Let $(L_1,\ldots, L_{k+1})$ be a partition of $V(T_0)\setminus V(T')$ such that for $i=0,\ldots, k-1$, $L_{i+1}\subseteq Leaf(T_i)$ and $T_{i+1}=T_i-L_{i+1}$ (note that vertices in $L_{k+1}$ may not be leaves).
Assume $G$ is a graph contains a subgraph $H'\cong T'$,  denoting by $\phi: T'\to H'$ the isomorphism, and $V(G)\setminus V(H')$ has a partition $(X_1,\ldots,X_{k+1})$ such that each of the followings hold, cohere $X_{k+2}=\{\phi(v)\mid v\in V(T'), N_{T_0}(v)\not\subseteq V(T')\}$.

\indent(a) $|N_G(x)\setminus \bigcup_{j=1}^k X_{j}|\geq m-1-\sum_{j=1}^{k}|L_j|$ for each $x\in X_{k+1}\cup X_{k+2}$;\\
\indent(b) $|N_G(x)\setminus \bigcup_{j=1}^{i-2} X_j|\geq m-1-\sum_{j=1}^{i-2}|L_j|$ for each $i=2,\ldots, k+1$ and for each $x\in \bigcup_{j=i}^{k+2} X_j$.

\noindent Then $G$ contains a subtree $H\cong T$ such that $H'\subseteq H$.
\end{lemma}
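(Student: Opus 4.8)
The plan is to build $H$ from $H'$ by a greedy layer-by-layer extension in the spirit of Lemmas~\ref{lem-tree} and~\ref{lem-tree2}, keeping track of which class $X_l$ each newly used vertex is drawn from. Put $T_{k+1}:=T'$ (the natural continuation of the recursion $T_{i+1}=T_i-L_{i+1}$ to $i=k$, since $L_{k+1}=V(T_k)\setminus V(T')$), root $T_0$ at a vertex of $T'$, and for $v\notin V(T')$ let $p(v)$ denote the parent of $v$ in $T_0$. I will enlarge the copy $H'\cong T_{k+1}$ through copies of $T_k,T_{k-1},\ldots,T_0$ by adding, in this order, the vertices of $L_{k+1}$, then those of $L_k$, \ldots, then those of $L_1$; the final copy of $T_0$ is the required $H$. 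The invariant maintained is that the image of every vertex of $L_s$ lies in $X_s\cup X_{s+1}\cup\cdots\cup X_{k+1}$; equivalently, writing $Y_i:=V(H')\cup X_{i+1}\cup\cdots\cup X_{k+1}=V(G)\setminus(X_1\cup\cdots\cup X_i)$ for $0\le i\le k$, once the layer $L_j$ has been added the current copy of $T_{j-1}$ uses only vertices of $Y_{j-1}$. Since $|T_s|=m-\sum_{l=1}^{s}|L_l|$, hypotheses (a) and (b) say precisely: every $x\in X_{k+1}\cup X_{k+2}$ has at least $|T_k|-1$ neighbours in $Y_k$, and for each $s$ with $1\le s\le k$ every $x\in\bigcup_{l=s+1}^{k+2}X_l$ has at least $|T_{s-1}|-1$ neighbours in $Y_{s-1}$ (this is (b) with $i=s+1$).

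For the extension step, suppose the copy of $T_j$ has been built respecting the invariant and we wish to add the vertices of $L_j$, processed in any order in which each $\ell\in L_j$ comes after $p(\ell)$. One checks that $p(\ell)$ is already embedded: for $j\le k$ the vertex $\ell$ is a leaf of $T_{j-1}$, so $p(\ell)$ is its unique neighbour in $T_{j-1}$ and lies in $V(T_j)$, while for $j=k+1$ one has $p(\ell)\in V(T')\cup L_{k+1}$. Let $x$ be the image of $p(\ell)$. Then $x\in\bigcup_{l=j+1}^{k+2}X_l$ when $j\le k$, resp.\ $x\in X_{k+1}\cup X_{k+2}$ when $j=k+1$: indeed, if $p(\ell)\in V(T')$ then $p(\ell)$ is adjacent in $T_0$ to $\ell\notin V(T')$, so $x\in X_{k+2}$ by the definition of $X_{k+2}$; and if $p(\ell)\notin V(T')$ then $p(\ell)\in L_{j'}$ for some $j'\ge j+1$, so the invariant gives $x\in X_{j'}\cup\cdots\cup X_{k+1}$. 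Hence the appropriate degree hypothesis applies to $x$, giving $x$ at least $|T_{j-1}|-1$ neighbours in $Y_{j-1}$. On the other hand, just before $\ell$ is embedded the number of used vertices is at most $|T_j|+|L_j|-1=|T_{j-1}|-1$, all of them lying in $Y_{j-1}$ (by the invariant and the placement rule), and one of them is $x$ itself, which is not a neighbour of itself. Therefore $x$ has an unused neighbour in $Y_{j-1}$, and since all of $V(H')$ is used this neighbour lies in $X_j\cup\cdots\cup X_{k+1}$; embedding $\ell$ there preserves the invariant. Carrying this out for $L_{k+1}$ (using (a)), then $L_k,\ldots,L_1$ (using (b)), yields a subtree $H\cong T_0$ with $H'\subseteq H$.

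The substance of the proof is the bookkeeping in the middle paragraph, and that is where I expect the only real care to be needed: at each stage one must verify simultaneously that the image $x$ of the attachment vertex $p(\ell)$ falls into a class $\bigcup_{l\ge j+1}X_l\cup X_{k+2}$ for which a degree hypothesis is available, and that every vertex used so far lies in $Y_{j-1}$, so that the $|T_{j-1}|-1$ guaranteed neighbours of $x$ are not entirely consumed. Both rely on (i) the inductive placement of $L_{j'}$-images in $X_{j'}\cup\cdots\cup X_{k+1}$ for $j'>j$, (ii) the hypothesis $L_s\subseteq Leaf(T_{s-1})$, which pins down where the attachment vertices of $L_s$ can sit, and (iii) the definition of $X_{k+2}$ as the images of the boundary vertices of $T'$. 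Beyond this, no case analysis is needed; one only notes that $T'$ may be assumed nonempty and that empty layers $L_s$ are dealt with vacuously.
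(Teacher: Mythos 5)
Your proof is correct, but it takes a more self-contained route than the paper. The paper argues by induction on $k$: it deletes $X_1$ from $G$ and $L_1$ from $T_0$, applies the induction hypothesis to embed $T_1$ in $G-X_1$ (the base case $k=0$ being exactly Lemma \ref{lem-tree}), and then restores the deleted leaves $L_1$ by invoking Lemma \ref{lem-tree2}, after checking via condition (b) with $i=2$ that every potential attachment vertex has degree at least $m-1$ in $G$. You instead unroll this entire recursion into a single explicit greedy pass, embedding $L_{k+1}$, then $L_k,\dots,L_1$, and you carry the invariant that images of $L_s$ land in $X_s\cup\cdots\cup X_{k+1}$ so that the right clause of (a)/(b) is available at each attachment and the counting $|T_j|+|L_j|-1=|T_{j-1}|-1$ leaves a free neighbour. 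The bookkeeping checks out (including the identification of the attachment vertex's class via $X_{k+2}$ when the parent lies in $T'$, and the observation that unused neighbours in $Y_{j-1}$ necessarily avoid $V(H')$). What your version buys is independence from the Hasunuma--Ono lemmas and full transparency about where each hypothesis is used; what the paper's version buys is brevity, since the layer-by-layer greedy mechanism is exactly what Lemmas \ref{lem-tree} and \ref{lem-tree2} already encapsulate. The only point to make explicit is the degenerate case $T'=\emptyset$ (or, in your step for $j\le k$, the possibility $p(\ell)\in L_j$), which you correctly flag as excluded because $T'\subseteq T_j$ is a nonempty subtree.
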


\begin{proof}
  The proof is by induction on $k$. If $k=0$ then the condition (a) is exactly the degree condition in Lemma \ref{lem-tree} and the condition (b) is nothing. Thus the result follows Lemma \ref{lem-tree}. So we may assume $k\geq1$ and the result holds for any smaller $k$. Let $G'=G-X_1$. Then $T'$ is a subtree of $T_1$, $H'$ is a subgraph of $G'$, $(L_2,\ldots, L_{k+1})$ is a partition of $V(T_1)\setminus V(T')$ and $(X_2,\ldots, X_{k+1})$ is a partition of $G'-V(H')$. It is easy to see that the conditions holds for $T_1$ and $G'$. Then by induction hypothesis, $G'$ contains a subgraph $H_1\cong T_1$ such that $H'\subseteq H_1$. Denote by $\phi':T_1\to H_1$ the isomorphism.

  Let $L_{k+2}=\{v\mid v\in V(T'), N_T(v)\not\subseteq V(T')\}$. Then $\phi'(L_{k+2})=\phi(L_{k+2})=X_{k+2}$.
  By (b) with $i=2$, we see that $d_G(x)\geq m-1$ for any $x\in \bigcup_{i=2}^{k+2} X_i=(V(G')\setminus V(H'))\cup X_{k+2})$. By the assumption of $L_{k+2}$, $N_T(L_1)\subseteq \bigcup_{i=2}^{k+2} L_i$. 
  Then by Lemma \ref{lem-tree2}, there is an embedding of $T$ in $G$ and the result holds.
\end{proof}

In our proof, we always apply Lemma \ref{lem-tree3} with $k=2$. Then we may restate the lemma when $k=2$ as follows.

\begin{coro}\label{cor-tree}
   Let $T$ be a tree of order $m$ and $T'$ a subtree of $T$.
 Let $L_1\subseteq Leaf(T)\setminus V(T'), L_2\subseteq Leaf(T-L_1)\setminus V(T')$ and $L_3=V(T)\setminus (V(T')\cup L_1\cup L_2)$.
Assume $G$ is a graph contains a subgraph $H'\cong T'$,  denoting by $\phi: T'\to H'$ the isomorphism, and $V(G)\setminus V(H'))$ has a partition $(X_1,X_2, X_3)$ such that each of the followings hold, cohere $X_{4}=\{\phi(v)\mid v\in V(T'), N_T(v)\not\subseteq V(T')\}$.

\indent(a) $d_G(x)\geq m-1$ for each $x\in X_2\cup X_3\cup X_4$.\\
\indent(b) $|N_G(x)\setminus X_1|\geq m-1-|L_1|$ and $|N_G(x)\setminus (X_1\cup X_2)|\geq m-1-|L_1|-|L_2|$ for each $x\in X_3\cup X_4$.

\noindent Then $G$ contains a subtree $H\cong T$ such that $H'\subseteq H$.
\end{coro}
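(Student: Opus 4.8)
The plan is to obtain this statement as the special case $k=2$ of Lemma~\ref{lem-tree3}, so that the only real work is to match up the hypotheses. Set $T_0=T$ and keep the given subtree $T'$. By construction $(L_1,L_2,L_3)$ is a partition of $V(T_0)\setminus V(T')$ of exactly the type required there: $L_1\subseteq Leaf(T_0)$, and putting $T_1:=T_0-L_1$ we have $L_2\subseteq Leaf(T_1)$, and then $T_2:=T_1-L_2$, while $L_3$ is whatever remains (and need not consist of leaves, which is permitted since $L_3$ plays the role of $L_{k+1}$). On the graph side, $(X_1,X_2,X_3)$ is the required partition of $V(G)\setminus V(H')$ into $k+1=3$ classes, and the set $X_4$ defined here is literally the set called $X_{k+2}$ in Lemma~\ref{lem-tree3}.

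It then remains to check that conditions (a) and (b) of the corollary are a rearrangement of conditions (a) and (b) of Lemma~\ref{lem-tree3} when $k=2$. Condition (a) of the lemma reads $|N_G(x)\setminus(X_1\cup X_2)|\ge m-1-|L_1|-|L_2|$ for all $x\in X_3\cup X_4$, which is precisely the second inequality in (b) here. Condition (b) of the lemma ranges over $i=2$ and $i=3$: for $i=2$ the union $\bigcup_{j=1}^{0}X_j$ is empty and the sum $\sum_{j=1}^{0}|L_j|$ is zero, so it says $d_G(x)\ge m-1$ for every $x\in X_2\cup X_3\cup X_4$, i.e. condition (a) here; for $i=3$ it says $|N_G(x)\setminus X_1|\ge m-1-|L_1|$ for every $x\in X_3\cup X_4$, i.e. the first inequality in (b) here. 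Hence every hypothesis of Lemma~\ref{lem-tree3} is satisfied, and its conclusion — a subtree $H\cong T$ of $G$ with $H'\subseteq H$ — is exactly what is to be proved.

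There is essentially no obstacle in this derivation; the only points to handle with care are the index bookkeeping (recognising that the $i=2$ instance of the lemma's condition (b) degenerates to the bare degree bound $m-1$) and confirming that the leaf-peeling $T_0\supseteq T_1\supseteq T_2$ in the lemma matches the triple $(L_1,L_2,L_3)$ named here. If a self-contained argument were preferred, one could instead unwind the proof of Lemma~\ref{lem-tree3} for $k=2$ directly: delete $X_1$, embed $T_1=T-L_1$ in $G-X_1$ by first embedding $T'$-extended-greedily and then re-inserting $L_2$ (using condition (a) and the $i=3$ part of (b) via Lemmas~\ref{lem-tree} and \ref{lem-tree2}), and finally re-insert the leaves of $L_1$ using the full degree bound in condition (a); but since Lemma~\ref{lem-tree3} is already available, the one-line specialisation above is the cleanest route.
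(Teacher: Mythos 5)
Your proposal is correct and is exactly the paper's route: Corollary~\ref{cor-tree} is stated there without proof, as the direct specialisation of Lemma~\ref{lem-tree3} to $k=2$. Your index bookkeeping matches precisely --- condition (a) of the lemma becomes the second inequality in (b) of the corollary, the $i=2$ instance of the lemma's condition (b) degenerates to the bare bound $d_G(x)\ge m-1$ on $X_2\cup X_3\cup X_4$, and the $i=3$ instance gives $|N_G(x)\setminus X_1|\ge m-1-|L_1|$ --- so nothing further is needed.
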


\section{2-connected graphs}

In this section, we give a characterization for a subgraph to contain an embedding of a specified tree avoiding some vertex. As a corollary, we confirm Mader's conjecture for 2-connected graphs. To apply Corollary \ref{cor-tree}, we need the following counting lemma to satisfy the conditions in Corollary \ref{cor-tree}.

\begin{lemma}\label{lem-leaf}
 Let $T$ be a tree, $T_0\subseteq T$ and $T_1,\ldots, T_k$ be the components of $T-V(T_0)$. If $L_0=Leaf(T_0)$, $|L_1|=Leaf(T)$ and $L_2=Leaf(T-L_1)$ then $|L_1|\geq |L_0|$ and $|L_1|+|L_2|\geq |L_0|+k$.
\end{lemma}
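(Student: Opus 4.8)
I read $T_0$ as a subtree of $T$, as elsewhere in the paper. The plan is to prove both inequalities at once by summing, over the partition $V(T)=V(T_0)\cup V(T_1)\cup\cdots\cup V(T_k)$, a lower bound on how many vertices each piece contributes to $L_1$, respectively to $L_1\cup L_2$.

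First I would record two preliminaries. Every vertex of $L_2$ is a non-leaf of $T$, so $L_1\cap L_2=\emptyset$ and $|L_1|+|L_2|=|L_1\cup L_2|$; moreover a vertex of $T$ lies in $L_1\cup L_2$ if and only if it has at most one neighbour that is not a leaf of $T$ (if its degree is at most $1$ it lies in $L_1$; if its degree is at least $2$ it lies in $L_2$ precisely when deleting the leaves of $T$ leaves it with degree at most $1$). Also, since $T$ is a tree and $T_0$ is connected, each $T_i$ is joined to $T_0$ by exactly one edge $a_ib_i$ with $a_i\in V(T_0)$ and $b_i\in V(T_i)$ (a second such edge would create a cycle); hence the families of components attached at distinct vertices of $T_0$ are pairwise disjoint, and $b_i$ is a leaf of $T$ exactly when $|T_i|=1$. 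Put $p=\#\{i:|T_i|\ge 2\}$.

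The main step is a per-component bound: $T_i$ contains at least one vertex of $L_1$, and it contains at least one vertex of $L_1\cup L_2$ when $|T_i|=1$ and at least two when $|T_i|\ge 2$. Rooting $T_i$ at $b_i$ and choosing a vertex $u$ of maximum depth, $u$ has no children, so $d_T(u)=1$ and $u\in L_1$; moreover $u\ne b_i$ once $|T_i|\ge 2$. For the second $L_1\cup L_2$-vertex when $|T_i|\ge 2$, look at the parent $w$ of $u$: if $w$ has two or more children they are all of maximum depth, hence all leaves of $T$; if $w$ has exactly one child and $w\ne b_i$ then $d_T(w)=2$ with a leaf neighbour, so $w\in L_2$; and if $w=b_i$ then $T_i$ is a star centred at $b_i$ and the conclusion is immediate. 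Dually, a leaf $v$ of $T_0$ can fail to lie in $L_1$ only if a component is attached at $v$, and can fail to lie in $L_1\cup L_2$ only if a component $T_i$ with $|T_i|\ge 2$ is attached at $v$: indeed a leaf of $T_0$ has degree at most $1$ in $T_0$, so if $v$ has two or more non-leaf neighbours in $T$ then at least one of them lies outside $V(T_0)$, hence equals some $b_i$, and $b_i$ is a non-leaf of $T$ only when $|T_i|\ge 2$. Sending each such ``bad'' leaf of $T_0$ to a witnessing component is an injection into $\{T_1,\dots,T_k\}$, respectively into $\{T_i:|T_i|\ge 2\}$; therefore $|L_1\cap V(T_0)|\ge |L_0|-k$ and $|(L_1\cup L_2)\cap V(T_0)|\ge |L_0|-p$.

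Summing over the partition finishes the proof: $|L_1|\ge(|L_0|-k)+k=|L_0|$, and $|L_1|+|L_2|=|L_1\cup L_2|\ge(|L_0|-p)+(k+p)=|L_0|+k$. The step I expect to be the real obstacle is extracting the second $L_1\cup L_2$-vertex from each large component, because the ``obvious'' extra leaf of $T_i$ viewed as an abstract tree may be $b_i$ itself, which need not be a leaf of $T$; this is exactly what forces the case split on the parent of a deepest vertex. The degenerate cases ($|T_0|=1$, or $T$ a single vertex or a single edge) need no special treatment provided one counts a degree-$0$ vertex as a leaf, so that $Leaf(T_0)=V(T_0)$ there and the injections above have domains of size at most $1$.
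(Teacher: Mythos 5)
Your proof is correct and follows essentially the same route as the paper's: each component $T_i$ contributes one leaf of $T$ (and a second $L_1\cup L_2$ vertex when $|T_i|\ge 2$), while the leaves of $T_0$ that drop out of $L_1$ (resp.\ $L_1\cup L_2$) are charged injectively to the components (resp.\ the components of order at least $2$) attached at them. The only difference is bookkeeping: the paper routes the charging through the attachment vertices $s_i$ and the inequality $|\bar R|+|S\cap L_2|\ge |S\cap L_0|$, whereas you phrase it as an explicit injection from ``bad'' leaves of $T_0$ into components, but the underlying observations are identical.
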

\begin{proof}
For $i=1,\ldots, k$, assume $s_ir_i\in E(T)$ such that $s_i\in V(T_0)$ and $r_i\in V(T_i)$. Note that some $s_i$'s may be the same vertex.
Denote $S=\{s_1,\ldots, s_k\}$ and $R=\{r_1,\ldots, r_k\}$. Then $|S|\leq k=|R|$.

Since each $T_i$ contains a leave of $T$, $$|L_1|\geq k+|L_0\setminus S|=|L_0|+k- |L_0\cap S|\geq |L_0|.$$
Moreover, note that some $T_i$ may contain exactly one vertex and $V(T_i)\cap L_2\neq\emptyset$ if $|V(T_i)|\geq 2$. Let $\bar R=\{r_i\mid |V(T_i)|\geq2\}$. Then $$|L_2|\geq |\bar R|+|L_2\cap S|.$$
For any $s_i\in S\cap L_0$, if $N_T(s_i)\cap \bar R=\emptyset$ then $N_T(s_i)\setminus V(T_0)\subseteq L_1$ and $s_i\in L_2$. It follows that $|N_T(s_i)\cap \bar R|+|\{s_i\}\cap L_2|\geq |\{s_i\}\cap L_0|$. Also, the inequality still holds if $s_i\notin L_0$. By the arbitrariness of $s_i$, we see that $|\bar R|+|S\cap L_2|\geq |S\cap L_0|$. It follows that $|L_1|+|L_2|\geq |L_0|+k - |L_0\cap S| + |\bar R| + |L_2\cap S|\geq |L_0|+k$. The proof is completed.
\end{proof}

By Corollary \ref{cor-tree} and Lemma \ref{lem-leaf}, we have the following lemma which provides a method to spare a vertex in an embedding of a tree.
Applying the lemma several times, we may confirm Mader's conjecture when $k=2$ or $3$.

\begin{lemma}\label{lem-main}
  Let $T_0$ be a tree of order $m$ and $G$ be a graph $\delta(G)\geq \delta\geq m$. Assume $T,B,H$ are disjoint subgraphs of $G$ such that $T\cong T_0$, $V(G)=V(H\cup B\cup T)$.
  Then one of the follows holds.

\indent (a) $N_G(H)\cap V(T)=\emptyset$.\\
\indent (b) $|N_G(v)\cap V(B)|\leq \delta-m$ for each $v\in V(H\cup T)$.\\
\indent (c) There exists $v\in V(H\cup T)$ such that $|N_G(v)\cap V(B)|\geq \delta-m+1$ and $H\cup T-v$ contains a tree $T'\cong T$.
\end{lemma}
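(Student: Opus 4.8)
The plan is to assume that neither (a) nor (b) holds and produce the tree required in (c). So fix a vertex $w \in V(H\cup T)$ with $|N_G(w)\cap V(B)| \geq \delta - m + 1$; this is the vertex we intend to spare. The strategy is to apply Corollary~\ref{cor-tree} with $H' = T$ (so $T' = T_0$ in the notation of the corollary becomes the whole tree we want to embed, and the ``$H'$'' there is the current copy $T$), working inside the graph $G' = H \cup T - w$, to re-embed a copy of $T_0$ avoiding $w$. Since $V(G') = V(H\cup B\cup T)\setminus(V(B)\cup\{w\})$, every vertex of $G'$ has at least $\delta - |N_G(v)\cap V(B)| - [v \sim w]$ neighbours inside $G'$; the point of choosing $w$ with many neighbours in $B$ is that those neighbours of $w$ ``use up'' part of the degree deficiency, so the vertices of $G'$ still have enough neighbours to run the greedy embedding.

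The key step is to set up the partition $(X_1, X_2, X_3)$ of $V(G')\setminus V(H')$ and the leaf sets $(L_1, L_2, L_3)$ so that conditions (a) and (b) of Corollary~\ref{cor-tree} hold. Here is where Lemma~\ref{lem-leaf} enters: take $T_0$ in that lemma to be a suitable ``core'' subtree of $T_0$ that we keep fixed — concretely the minimal subtree meeting every component of $T_0 - w$, or more simply a subtree chosen so that re-embedding the rest is forced by the leaf-counting — and let $L_1 = Leaf(T_0)$, $L_2 = Leaf(T_0 - L_1)$. Lemma~\ref{lem-leaf} then gives $|L_1| \geq |L_0|$ and $|L_1| + |L_2| \geq |L_0| + k$, where $k$ is the number of components of $T_0 - V(T_0^{\mathrm{core}})$. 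These two inequalities are exactly what is needed to absorb the degree loss: a vertex in $X_3 \cup X_4$ needs $m - 1 - |L_1|$ neighbours outside $X_1$ and $m - 1 - |L_1| - |L_2|$ neighbours outside $X_1 \cup X_2$, and since its degree in $G'$ is at least $\delta - m \geq 0$ plus contributions we can bound below using the assumption that (b) fails, the slack provided by $|L_1|$ and $|L_1|+|L_2|$ covers it.

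The main obstacle I expect is the bookkeeping that matches the combinatorial quantities $|L_0|$, $k$ from Lemma~\ref{lem-leaf} against the degree deficiencies $\delta - |N_G(v)\cap V(B)|$ across the three classes $X_1, X_2, X_3$, while simultaneously ensuring that $X_4 = \{\phi(v) : v \in V(T'),\ N_{T_0}(v)\not\subseteq V(T')\}$ behaves — i.e.\ that the vertices of the core subtree with children outside the core get the stronger bound in condition (b). One delicate point is the location of $w$: if $w$ lies in $V(T)$ we must choose the core subtree $T_0^{\mathrm{core}}$ and the re-embedding so that the copy of $T_0^{\mathrm{core}}$ avoids $w$ in the first place, which may require an intermediate argument (perhaps choosing $w$ to be a leaf of the current $T$, or re-rooting). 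A second subtlety is that Corollary~\ref{cor-tree} only guarantees $H' \subseteq H$ in its conclusion, so we must arrange $H'$ to already be a copy of $T_0^{\mathrm{core}}$ sitting in $H\cup T - w$; the natural choice is to take $H'$ to be what remains of the current copy $T$ after deleting the vertices we plan to re-route, and then the conclusion of the corollary hands us the full copy $T' \cong T_0$ avoiding $w$, which is precisely statement (c).
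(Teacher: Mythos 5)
Your proposal is a plan rather than a proof, and the plan as stated misses the central difficulty. You fix a single vertex $w$ with $|N_G(w)\cap V(B)|\geq \delta-m+1$ and hope to re-embed $T_0$ in $H\cup T-w$ via Corollary~\ref{cor-tree}. But the obstruction is not $w$ alone: the entire set $X=\{v\in V(H\cup T): |N_G(v)\cap V(B)|\geq \delta-m+1\}$ may be large, and every vertex of $X$ is potentially degree-deficient inside $H\cup T$ (only vertices \emph{outside} $X$ are guaranteed at least $m-1$ neighbours there). Your remark that the neighbours of $w$ in $B$ ``use up part of the degree deficiency'' has it backwards --- $w$'s many $B$-neighbours do nothing for the other vertices; they are precisely what makes $w$ and every other member of $X$ unusable at high-degree positions of the new embedding. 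Consequently the choice of which vertex to spare cannot be an arbitrary element of $X$: it is the \emph{output} of a structural analysis, not an input. The paper first proves that $X\subseteq V(T)$ and $X\cup (V(T)\cap N_G(H))=V(T)$, roots $T$ at a vertex of $Y=V(T)\cap N_G(H)$, selects an edge $w_1w_2$ with $w_1\in Y$ the parent of $w_2\in X$ minimizing the subtree $T_{w_2}$, and uses that minimality to show the deficient vertices inside $T_{w_2}$ form a rooted subtree $T_2$ with $N_G(H)\cap V(T_2-Leaf(T_2))=\emptyset$. Only then does the set $X_1$ of Corollary~\ref{cor-tree} (taken to be $V(T_2-w_2)$, i.e.\ \emph{all} the deficient vertices, matched against the leaves counted by Lemma~\ref{lem-leaf}) satisfy the hypothesis $|N_G(x)\cap X_1|\leq |L_0|$.

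The points you flag as ``obstacles I expect'' --- where the replacement vertices come from, how the core subtree avoids $w$, how $X_4$ behaves --- are exactly the substance of the proof, and they are not routine bookkeeping. Resolving them requires: (i) finding a vertex $h\in N_G(w_1)\cap V(H)$ to substitute for $w_2$, which needs $w_1\in Y$; (ii) a further dichotomy (the paper's Claim~2 and Cases 1--2) on whether $T_3=T_2-Leaf(T_2)$ sends an edge into one of the components $R_i-r_i$ of $T_{w_2}-V(T_2)$, with a second minimal choice $w_3r$, $w_4$ and two different re-routed trees $T_5'$, $T_6'$; and (iii) in each branch, verifying the hypotheses of Corollary~\ref{cor-tree} using the two inequalities of Lemma~\ref{lem-leaf} applied to $T_2\cap T_{w_4}$, not to an unspecified ``core''. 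Without these steps there is no argument that the greedy re-embedding can be completed while confining all of $X$ to leaf-like positions, so the proposal has a genuine gap.
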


\begin{proof}
Assuming none of (a), (b) and (c) holds, we will derive a contradiction. To this end, denote
 $$X=\{v\in V(H\cup T)\mid |N_G(v)\cap V(B)|\geq \delta-m+1\} \mbox{ and }Y=V(T)\cap N_G(H).$$
 Then by the assumptions, we see that  $X, Y\neq\emptyset$. We have the following claim.

  \noindent\textbf{Claim 1.} $X\subseteq V(T)$ and $X\cup Y=V(T)$.

First, $X\subseteq V(T)$ is clear since any vertex in $X\cap V(H)$ satisfies (c). For the second part, suppose that there exists $z\in V(T)\setminus (X\cup Y)$. Then by the definitions of $X, Y$, $|N_G(z)\cap V(B)|\leq \delta-m$ and $N(z)\cap V(H)=\emptyset$. Thus $d_G(z)\leq |N_G(z)\cap V(B)|+|N_G(z)\cap V(T)|\leq \delta-m+m-1=\delta-1$, a contradiction to the assumption of the minimum degree. Claim 1 is proved.

  By Claim 1, pick $u\in Y$ such that $|X\setminus \{u\}|$ is as large as possible. Then $X\setminus \{u\}\neq\emptyset$, for otherwise, $X=\{u\}$ and $Y\setminus\{u\}=\emptyset$, implying $m=1$. Then each vertex in $H$ forms a tree isomorphic to $T_0$ and $u$ is the desired vertex in (c). Consider $T$ as a tree rooted at $u$.

%

  By Claim 1, there exists an edge $w_1w_2\in E(T)$ such that $w_1\in Y$, $w_2\in X$ and $w_1$ is the parent of $w_2$. Subject to this, we may furthermore, assume that $w_1w_2$ is such an edge such that the rooted subtree $T_{w_2}$ is as small as possible. Under the assumption, we see that no edge of $T_{w_2}$ satisfies the conditions of $w_1w_2$, i.e., for any edge $w_1'w_2'$ of $T_{w_2}$ such that $w_1'$ is the parent of $w_2'$, either $w_1'\notin Y$ or $w_2'\notin X$.

  As a corollary, for any $w\in X\cap V(T_{w_2})$, all the ancestors of $w$ in $T_{w_2}$ are in $X$. Thus there is a subtree $T_2\subseteq T_{w_2}$ such that 
  $V(T_{w_2})\cap X=V(T_2)$.
  Let $L_0=Leaf(T_2)$ and $T_3=T_2-L_0$. 
  By the choice of $w_1w_2$,
  \begin{equation}\label{eq-T3H}
  N_G(H)\cap V(T_3)=\emptyset.
  \end{equation}
  Let $R_1,\ldots, R_p$ be the rooted trees obtained from $T_{w_2}$ by deleting the vertices in $T_2$. For $i=1,\ldots, p$, assume $r_i$ is the root of $R_i$ and $s_i$ is the parent of $r_i$ in $T_{w_2}$. 

 \noindent\textbf{Claim 2.} $N_G(T_3)\cap \bigcup_{i=1}^p V(R_i-r_i)\neq\emptyset$.

Suppose, to the contrary, that the claim is not true.
 Let $G'=G-(V(B)\cup \{w_2\})$ and
 pick $h\in N_G(w_1)\cap V(H)$.
Let $T_4=T-V(T_{w_2}-w_2)$ and $T_4'$ is obtained from $T_4$ by replacing $w_2$ with $h$. Then $T_4'\cong T_4\subseteq T$.

Define $L_1=Leaf(T_{w_2})$, $L_2=Leaf(T_{w_2}-L_1)$ and $L_3=V(T_{w_2}-w_2)\setminus (L_1\cup L_2)$. Then by Lemma \ref{lem-leaf}, $(L_1,L_2,L_3)$ is a partition of $V(T)\setminus V(T_4)$ such that
\[
|L_1|\geq |L_0|\mbox{ and }|L_1|+|L_2|\geq |L_0|+p.
 \]
Let $X_1=V(T_2-w_2)$, $X_2=\{r_1,\ldots, r_p\}$ and $X_3=V(G')\setminus (V(T_4')\cup X_1\cup X_2)$. Then $(X_1,X_2,X_3)$ is a partition of $V(G')\setminus V(T_4')$ such that $|N_G(x)\cap X_1|\leq |L_0|\leq |L_1|$ for any $x\in X_3\cup\{h\}$ by \eqref{eq-T3H} and by the contradiction assumption. Now we verify the degree conditions in Corollary \ref{cor-tree}. In fact, for $x\in X_2\cup X_3\cup \{h\}$, $d_{G'}(x)\geq d_G(x)-|N_G(x)\cap (V(B)\cup \{w_2\})|\geq \delta -(\delta -m +1)=m-1$ since $x\notin X$, and for any $x\in X_3\cup \{h\}$, $|N_{G'}(x)\setminus X_1|= d_{G'}(x)-|N_G(x)\cap X_1|\geq m-1-|L_0|\geq m-1-|L_1|$, $|N_{G'}(x)\setminus (X_1\cup X_2)|\geq m-1-|L_0|-|X_2| = m-1-|L_0|-p \geq m-1-|L_1|-|L_2|$. 
Then by Corollary \ref{cor-tree}, $G'$ contains a tree $T'\cong T$. Then $w_2$ is a desired vertex in (c), a contradiction. The claim is proved.

Now we are ready to prove the result. In fact, by Claim 2, there exists $w_3r\in E(G)$ such that $w_3\in V(T_3)$ and $r\in V(R_i-r_i)$ for some $i$. Let $w_4$ be a child of $w_3$ in $T_2$. Furthermore, we may assume such $w_3r$ and $w_4$ satisfy that the subtree $T_{w_4}$ of $T$ is as small as possible. Without loss of generality, we may assume that $i=1$. By the choice of $w_3$,
\begin{equation}\label{eq-Tw4R}
N_G(r')\cap V(T_{w_4})\cap V(T_3)=\emptyset \mbox{ for any }r'\in \bigcup_{i=1}^p V(R_i-r_i).
\end{equation}
 Let $G'=G-(V(B)\cup \{w_4\})$. We consider the following two cases.

\indent\textbf{Case 1.} $V(T_{w_4})\cap V(R_1)\neq\emptyset$.

In this case, $V(R_1)\subseteq V(T_{w_4})$ since both them are rooted subtrees of $T$.
Let $T_5=T-V(T_{w_4}-w_4)$ and $T_5'=T-V(T_{w_4}-r)+w_3r$.
Then $T_5'\cong T_5\subseteq T$.

Define $L_1=Leaf(T_{w_4})$, $L_2=Leaf(T_{w_4}-L_1)$ and $L_3=V(T_{w_4}-w_4)\setminus (L_1\cup L_2)$. Then by Lemma \ref{lem-leaf}, $(L_1,L_2,L_3)$ is a partition of $V(T)\setminus V(T_5)$ such that
\[
|L_1|\geq |L_0\cap V(T_{w_4})|\mbox{ and }|L_1|+|L_2| \geq |L_0\cap V(T_{w_4})| + |\{r_1,\ldots, r_p\}\cap V(T_{w_4})|.
\]
Let $X_1=V(T_{w_4}-w_4)\cap V(T_2)$, $X_2=\{r_1,\ldots, r_p\}\cap V(T_{w_4})$ and $X_3=V(G')\setminus (V(T_5')\cup X_1\cup X_2)$. Then $(X_1,X_2,X_3)$ is a partition of $V(G')\setminus V(T_5')$ such that $|N_{G'}(x)\cap X_1|\leq |L_0\cap V(T_{w_4})|\leq |L_1|$ for any $x\in X_3\cup \{r\}$ by \eqref{eq-T3H} and \eqref{eq-Tw4R}. Now we verify the degree conditions in Corollary \ref{cor-tree}. In fact, noting that $\{v\mid N_{T}(v)\not\subseteq V(T_5)\}=\{w_4\}$, for any $x\in X_2\cup X_3\cup \{r\}$, $d_{G'}(x)\geq d_G(x)-|N_G(x)\cap (V(B)\cup \{w_4\})|\geq m-1$, and for any $x\in X_3\cup \{r\}$, $|N_{G'}(x)\setminus X_1|= d_{G'}(x)-|N_{G'}(x)\cap X_1|\geq m-1-|L_0\cap V(T_{w_4})|\geq m-1-|L_1|$, $|N_{G'}(x)\setminus (X_1\cup X_2)|\geq m-1-|L_0\cap V(T_{w_4})|-|X_2|
\geq m-1-|L_1|-|L_2|$. Then by Corollary \ref{cor-tree}, $G'$ contains a tree $T'\cong T$. However, $G[V(B)\cup \{w_4\}]$ is contained in a block of $G-V(T')$, a contradiction.

%
%

 \indent\textbf{Case 2.} $V(T_{w_4})\cap V(R_1)=\emptyset$.

 In this case, let $T_6=T[V(T)\setminus (V(T_{w_4}-w_4)\cup V(R_1-\{r_1\}\cup N_{R_1}(r_1)))]$ and $T_6'$ is obtained from $T_6$ by replacing $r$ with a neighbor of $r_1$ in $H$ if $r\in N_{R_1}(r_1)$,
and then replacing $w_4$ with $r$. Then $T_6'\cong T_6\subseteq T$.

Define $L_1=Leaf(T_{w_4})$, $L_2=Leaf(T_{w_4}-L_1)$ and $L_3=V(T)\setminus (V(T_6)\cup L_1\cup L_2)$. Then by Lemma \ref{lem-leaf}, $(L_1,L_2,L_3)$ is a partition of $V(T)\setminus V(T_6)$ such that
\[
|L_1|\geq |L_0\cap V(T_{w_4})|\mbox{ and }|L_1|+|L_2| \geq |L_0\cap V(T_{w_4})| + |\{r_2,\ldots, r_p\}\cap V(T_{w_4})|.
\]
Let $X_1=(V(T_{w_4}-w_4))\cap V(T_2)$, $X_2=\{r_2,\ldots, r_p\}\cap V(T_{w_4})$ and $X_3=V(G')\setminus (V(T_6')\cup X_1\cup X_2)$. Then $(X_1,X_2,X_3)$ is a partition of $V(G')\setminus V(T_6')$ such that $|N_{G'}(x)\cap X_1|\leq |L_0\cap V(T_{w_4})|\leq |L_1|$ for any $x\in V(H)\cup \bigcup_{i=1}^p V(R_i-r_i)$ by \eqref{eq-T3H} and \eqref{eq-Tw4R}. Now we verify the degree conditions in Corollary \ref{cor-tree}. In fact, letting $X_4=\{r\}\cup (N_{T_6'}(r_1)\cap (V(H)\cup V(R_1)))$, for $x\in X_2\cup X_3\cup X_4$, $d_{G'}(x)\geq d_G(x)-|N_G(x)\cap (V(B)\cup \{w_4\})|\geq m-1$, and for any $x\in X_3\cup X_4$, $|N_{G'}(x)\setminus X_1|\geq d_{G'}(x)-|L_0\cap V(T_{w_4})|\geq m-1-|L_1|$ and $|N_{G'}(x)\setminus (X_1\cup X_2)|\geq m-1-|L_0\cap V(T_{w_4})|- |X_2|\geq 
m-1-|L_1|-|L_2|$. Then by Corollary \ref{cor-tree}, $G'$ contains a tree $T'\cong T$. However, $G[V(B)\cup \{w_4\}]$ is contained in a block of $G-V(T')$, a contradiction, which completes the proof.
\end{proof}


By Lemma \ref{lem-main}, we may confirm Conjecture \ref{conj-mader} for $k=2$.

\begin{theorem}\label{thm-2-conn}
  For every tree $T_0$ of order $m$, each 2-connected graph $G$ with $\delta(G)\geq m+2$ contains a subtree $T\cong T_0$ such that $G-V(T)$ is 2-connected.
\end{theorem}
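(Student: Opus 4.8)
The plan is to induct on $|G|$, using Theorem~1.1 (Chartrand--Kaugars--Lick) to peel off a single removable vertex and reduce the order, and to use Lemma~3.4 (\texttt{lem-main}) precisely to control what happens near the $2$-cut that may be created. First I would take a $2$-connected $G$ with $\delta(G)\geq m+2$. Since $\delta(G)\geq m+2 \geq \lfloor 3\cdot 2/2\rfloor = 3$, Theorem~1.1 yields a vertex $x$ with $G-x$ still $2$-connected; but $G-x$ may now have minimum degree only $m+1$, so a naive induction fails. Instead I would iterate: repeatedly delete Chartrand-type removable vertices, keeping $2$-connectivity, until we reach a subgraph in which we can no longer both delete a vertex and stay $2$-connected with enough degree to continue; the vertices removed form a set $D$, and we want to embed $T_0$ avoiding $D$ so that $G - V(T)$ retains $D$ as a ``safe reservoir'' of vertices of high degree gluing the graph together.

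More concretely, here is the cleaner route I expect to work. Let $G'$ be obtained from $G$ by deleting vertices one at a time, each time a Chartrand vertex of the current graph, as long as the current graph has at least, say, $m+3$ vertices and minimum degree $\geq m+2$ after deletion is not required --- rather, I would stop when $|G'| = $ some small threshold or when $\delta(G')$ drops to exactly $m+1$. At that point $G'$ is $2$-connected with $\delta(G') \geq m+1 \geq m$, so $G'$ contains a copy of $T_0$ by greedy embedding (Lemma~2.1 with $T_1$ a single vertex, since $\delta(G')\geq m-1$). Now set $T$ = that copy, $B = V(G) - V(G')$ (the deleted vertices), and $H = V(G') - V(T)$, so that $V(G) = V(H \cup B \cup T)$ as required by Lemma~3.4. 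The point of $B$: every vertex of $B$ was removable at its stage, which I would argue implies $G[B \cup \text{(appropriate part)}]$ is well-connected to the rest; in particular, if $G - V(T)$ had a cut vertex or were disconnected, the cut would have to ``hit'' $B$ in a controlled way.

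I would then apply Lemma~3.4 to $T, B, H$. It gives three alternatives. Alternative (a), $N_G(H)\cap V(T) = \emptyset$: this cannot hold because $G$ is $2$-connected and $T$ is a proper nonempty induced subgraph (we can ensure $H \neq \emptyset$ by stopping the deletion process early enough, e.g.\ while $|G'| > m$), so there must be edges from $H$ to $V(T)$ --- wait, more carefully, (a) says $H$ has no neighbor in $T$; if $H\neq\emptyset$ and $B$ separates $H$ from $T$ in $G$ with $|B|\geq \dots$; actually I would instead arrange things so that (a) is excluded by construction or handled by re-choosing $T$. Alternative (c) directly gives a vertex $v \in V(H\cup T)$ with $|N_G(v)\cap V(B)|$ large and a copy $T' \cong T_0$ in $(H\cup T) - v$; then I apply the induction hypothesis / Chartrand structure to $G - V(T')$ --- but really the content of (c) is that we found a strictly better configuration, so we recurse or terminate. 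The main case, and the main obstacle, is alternative (b): $|N_G(v)\cap V(B)|\leq \delta(G) - m \leq 2$ for every $v\in V(H\cup T)$. I would show (b) together with $2$-connectivity of $G$ forces $G - V(T) = G[H \cup B]$ to be $2$-connected: any $2$-cut of $G[H\cup B]$ would extend, using the bounded attachment of $B$, to a small cut of $G$ itself, contradicting $\kappa(G)\geq 2$; here the fact that each vertex of $B$ was a Chartrand-removable vertex (hence $G$ restricted to $B$ plus its neighborhood is $2$-connected in the relevant sense) is what I expect to need, and getting this gluing argument airtight is the delicate step. Once (b) $\Rightarrow$ $G - V(T)$ is $2$-connected is established, we are done; in cases (a) and (c) we reduce to a strictly smaller or strictly better instance and close the induction.
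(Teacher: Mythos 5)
Your instinct to combine a greedy embedding of $T_0$ with Lemma \ref{lem-main} is the right one, but the proposal has two genuine gaps, both traceable to your choice of $B$. The paper takes $B$ to be a \emph{maximum block of $G-V(T)$}, with $T$ chosen over all embeddings of $T_0$ so that $|B|$ is as large as possible; this extremal choice is what makes every alternative of Lemma \ref{lem-main} productive. Your $B$ is the set of Chartrand-deleted vertices, which is not a block, need not even be connected, and carries no extremal property. Consequently, in alternative (c) you obtain a vertex $v$ with many neighbours in $B$ and a re-embedding $T'$ avoiding $v$, but you have no quantity that strictly improves --- ``we recurse or terminate'' is not backed by any decreasing or increasing measure. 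In the paper, (c) yields an immediate contradiction because $G[V(B)\cup\{v\}]$ lies in a single block of $G-V(T')$ that is strictly larger than $B$.

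The second gap is the claim that alternative (b) (every $v\in V(H\cup T)$ has at most $\delta(G)-m\leq 2$ neighbours in $B$) together with $\kappa(G)\geq 2$ forces $G[H\cup B]$ to be $2$-connected. This does not follow: bounded attachment to $B$ says nothing about cut vertices of $G[H\cup B]$ lying inside $H$, and the fact that each vertex of $B$ was Chartrand-removable at its stage only guarantees $2$-connectivity of $G$ minus prefixes of the deletion order, not of $G-V(T)$. In the paper, (b) is not a terminal success case at all: the goal is to show $H=\emptyset$ (so that $G-V(T)$ equals the block $B$); one applies Lemma \ref{lem-main} with $\delta=m+1$ to get $|N_G(v)\cap V(B)|\leq 1$ for all $v\notin V(B)$, then takes a shortest path $Q$ with both ends in $B$ and interior outside $B$, checks that $\delta(G-V(B\cup Q))\geq m-1$ so that $T_0$ re-embeds there, and obtains a block properly containing $B\cup Q$ --- contradicting the maximality of $B$. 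Without an analogue of this re-embedding step (and of the extremal choice it contradicts), your case (b) does not close, and the proof is incomplete.
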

\begin{proof}
  Since $\delta(G)\geq m+2$, there exists a subtree $T\cong T_0$. Then we may assume $T$ is such a tree such that the maximum block $B$ of $G-V(T)$ has order as large as possible. Let $H=G-V(B\cup T)$. We are to prove $V(H)=\emptyset$. In fact, by contradiction, assume that $V(H)\neq\emptyset$.

  Then $|N_G(v)\cap V(B)|\leq 1$ for each $v\in V(H)$ by the assumption of $B$. Also, since $G$ is 2-connected and $B\cup H$ is not 2-connected, $N_G(H)\cap V(T)\neq\emptyset$.
  If there is a vertex $v\in V(H\cup T)$ such that $|N_G(v)\cap V(B)|\geq 2$ and $H\cup T-v$ contains a tree $T'\cong T_0$, then $T'$ is a subtree of $G$ such that $G[V(B)\cup \{v\}]$ is contained in a block of $G-V(T')$, a contradiction to the choice of $T$. So applying Lemma \ref{lem-main} with $\delta=m+1$, we see that for any $v\in V(T\cup H)$, $|N_G(v)\cap V(B)|\leq m+1-m=1$.

  Since $G$ is 2-connected, there is a shortest path $Q$ with both ends in $B$ and inner vertices not in $V(B)$ such that $|V(Q)|\geq3$. By the assumption of $Q$, $|N_G(v)\cap V(Q)|\leq 3$ for each $v\notin V(B\cup Q)$. Thus $|N_G(v)\cap V(B\cup Q)|\leq 4$. In fact, if the equation holds then a shorter path $Q'$ with ends in $B$ by using $v$ can be found easily, a contradiction. So $|N_G(v)\cap V(B\cup Q)|\leq 3$ and thus $\delta(G-V(B\cup Q))\geq m+2-3=m-1$. This implies there is a $T'\cong T_0$ in $G-V(B\cup Q)$ such that $B\cup Q$ is contained in a block of $T-V(T')$, a contradiction to our assumption. This completes the proof.
\end{proof}

\section{3-connected graphs}

In this case, we deal with $3$-connected graphs. The main method is similar to the one for 2-connected graphs. However, in the proof of 2-connected graphs the subgraph $B$ is always 2-connected, since every 2-connected graph has an ear-decomposition. This no longer works for 3-connected graphs.
Instead, we maintain a subdivision of some simple 3-connected graph in proof. In the end, by the assumption of the minimum degree, the subdivision is in fact a 3-connected graphs. 

Let $G$ be a subdivision of some simple 3-connected graph.
An \textit{ear} of $G$ is a maximal path $P$ whose each internal vertex has degree 2 in $G$. Then any two ears of $G$ has different pair of ends.
Write $t(G)=|\{v\mid d_G(v)\geq3\}|$. Then $G$ is 3-connected if and only if $t(G)=|G|$. Let $X, Y\subseteq V(G)$. An $(X,Y)$-path is path with one end in $X$, the other end in $Y$ and internal vertices not in $X\cup Y$. If $|X|=\{x\}$ then we use $(x, Y)$-path instead of $(\{x\}, Y)$-path.

\begin{theorem}\label{thm-main3}
  For every tree $T_0$ of order $m$, each 3-connected graph $G$ with $\delta(G)\geq m+3$ contains a subtree $T\cong T_0$ such that $G-V(T)$ is 3-connected.
\end{theorem}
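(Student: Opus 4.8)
The plan is to mimic the proof of Theorem 3.1 (the 2-connected case), replacing the role of the ``maximal block'' $B$ by a subdivision $B$ of a simple $3$-connected graph living in $G-V(T)$. First I would choose a subtree $T\cong T_0$ (which exists since $\delta(G)\ge m+3\ge m-1$) together with a subgraph $B\subseteq G-V(T)$ that is a subdivision of a simple $3$-connected graph, chosen so that first $|V(B)|$ is as large as possible and, subject to that, $t(B)=|\{v\in V(B): d_B(v)\ge 3\}|$ is as large as possible. Set $H=G-V(B\cup T)$; the goal is to show $V(H)=\emptyset$ and $B=G-V(T)$ is itself $3$-connected (this last point follows because, by the degree hypothesis, if $V(H)=\emptyset$ every vertex of $B$ has enough neighbours inside $B$ to force $t(B)=|V(B)|$, so $B$ is $3$-connected).

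The heart of the argument is to run Lemma 2.5 with $\delta=m+2$ (so $\delta-m=2$) against this $B$, twice, once for each ``missing'' unit of connectivity. Suppose $V(H)\ne\emptyset$. Since $G$ is $3$-connected and $B\cup H$ has a $2$-cut (it is not $3$-connected, having an attachment structure), we get $N_G(H)\cap V(T)\ne\emptyset$; and by maximality of $|V(B)|$, no vertex of $H\cup T$ can have three internally-disjoint $B$-paths without contradicting the choice of $B$, so the hypotheses exclude outcome (a) of Lemma 2.5. Applying Lemma 2.5 twice (peeling off one unit at a time, as in the passage ``$|N_G(v)\cap V(B)|\le m+1-m=1$'' in the $2$-connected proof, but now getting down to $|N_G(v)\cap V(B)|\le 2$ for every $v\in V(H\cup T)$) rules out outcome (c) each time, forcing outcome (b): every vertex of $H\cup T$ sends at most two edges into $B$. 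Then, exactly as in Theorem 3.1 but one level up, I would find a shortest structure $Q$ witnessing $3$-connectivity across $B$ — here $Q$ is a set of internally-disjoint $(B,B)$-paths or an $(x,B)$-fan of size $3$ that would enlarge $t(B)$ or $|V(B)|$ — argue that each vertex outside $B\cup Q$ meets $V(B\cup Q)$ in at most a bounded number $c$ of vertices (shortest-path/fan minimality forces $c\le 5$, with $c=6$ creating a shortcut), hence $\delta(G-V(B\cup Q))\ge m+3-5=m-2$; then by Lemma 2.1 (greedy embedding, or rather a small variant) produce $T'\cong T_0$ in $G-V(B\cup Q)$, so that $B\cup Q$ lies in a $3$-connected piece of $G-V(T')$ that is strictly larger (in $|V|$ or in $t$) than $B$, contradicting the extremal choice.

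The main obstacle, and where the real work lies beyond the $2$-connected template, is the replacement of ``block'' by ``subdivision of a simple $3$-connected graph'' and the corresponding enlargement step: one must (i) set up the right two-parameter extremal choice $(|V(B)|, t(B))$ so that every local improvement — adding a vertex to $B$, or adding an ear/path that raises a degree-$2$ vertex to degree $3$ — genuinely contradicts maximality, and (ii) control the number of vertices of $B\cup Q$ adjacent to any fixed outside vertex, since unlike a single path the union $B\cup Q$ is more complicated; the ``shortest path'' argument must be upgraded to a minimal-fan / minimal-ear-addition argument, and the numerical slack $m+3-c\ge m-1$ needs $c\le 4$, which may require choosing $Q$ as a shortest path attached at two prescribed vertices rather than an arbitrary fan, and possibly re-running Lemma 2.5 once more to squeeze the bound. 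Getting these constants to line up — so that the degree condition $m+3-c\ge m-1$ survives — is the delicate point; everything else is a faithful translation of Section 3, with Corollary 2.4 and Lemma 2.3 doing the tree-embedding bookkeeping exactly as before.
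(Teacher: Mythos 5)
There are two genuine gaps here, both at the points you yourself flag as ``delicate.'' First, your claim that applying Lemma \ref{lem-main} twice with $\delta=m+2$ forces $|N_G(v)\cap V(B)|\le 2$ for every $v\in V(H\cup T)$ does not work: a vertex with three neighbours all lying on a single ear of $B$ (spanning at most two internal vertices of that ear) gives no way to enlarge $t(B)$ or otherwise improve $B$, so outcome (c) of Lemma \ref{lem-main} cannot be excluded down to the bound $2$. The correct attainable bound is $3$, and even that requires the ear-based Claims 1--3 of the paper's proof (three disjoint $(u,X)$-paths when the neighbours are not confined to one ear, and an ear-shortening argument when they are). Related to this, your extremal choice --- maximize $|V(B)|$ first, then $t(B)$ --- is the wrong order and the wrong direction: the paper maximizes $t(B)$ and then \emph{minimizes} $|B|$, and it is precisely the minimality of $|B|$ that powers the ear-shortening step in Claim 2 (a vertex with four neighbours on one ear lets you replace a segment of the ear by that vertex, shrinking $B$). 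With $|V(B)|$ maximized that move is unavailable, and moreover a larger $B$ only makes the degrees in the complement worse.

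Second, and more seriously, your final count $\delta(G-V(B\cup Q))\ge m+3-5=m-2$ is short of the $m-1$ needed to embed $T_0$, and you do not close this gap. The paper does not close it by improving the constant $c$ either; instead it shows that \emph{any} vertex $h$ with $|N_G(h)\cap V(B')|\ge 5$ (where $B'=B\cup P$ for a carefully chosen minimal $(Y,B)$-path $P$ attached at a degree-$2$ vertex of $B$) yields a contradiction: using $|N_G(h)\cap V(B)|\le 3$ and $|N_G(h)\cap V(P)|\le 3$, one reroutes $P$ through $h$ to produce either a shorter qualifying path or a better $B$, contradicting the extremal choices. That rerouting case analysis (together with the preliminary Claim 4 handling the case $Y=\emptyset$ separately) is the actual heart of the $3$-connected proof and is entirely absent from your outline. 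As written, the proposal is a plausible plan but not a proof; the two steps you defer are exactly where all the work lies.
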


\begin{proof}
  Since $\delta(G)\geq m+3$, there exists a subtree $T\cong T_0$.  Let $B$ be a induced subgraph of $G-V(T)$ isomorphic to a subdivision of some simple 3-connected graph. Such $B$ does exist. In fact, since $\delta(G-V(T))\geq m+3-|T|=3$ and thus $G-V(T)$ contains a subdivision of $K_4$. Then the minimum induced subgraph containing a subdivision of $K_4$ is a candidate of $B$. Furthermore, we may assume $T, B$ are such subgraphs such that
  \begin{equation}\label{eq-choseB}
   \mbox{ $t(B)$ is as large as possible and then $|B|$ is as small as possible.}
   \end{equation}

  Let $H=G-V(B\cup T)$. If $V(H)=\emptyset$ then $B=G-V(T)$ is a subdivision of some simple 3-connected graph. In fact, if there exists a vertex $v\in V(B)$ such that $d_B(v)=2$ then $d_G(v)=d_B(v)+|N_G(v)\cap V(T)|\leq 2+m<\delta(G)$, a contradiction. This implies $B$ is 3-connected and the result holds. So, by contradiction, we may assume that $V(H)\neq\emptyset$.
Let $X=\{v\in V(B)\mid d_B(v)\geq 3\}$ and $Y=V(B)\setminus X$. Then $t(B)=|X|$.

\textbf{Claim 1.} For any $u\in V(H\cup T)$, if $|N_G(u)\cap V(B)|\geq3$ and $H\cup T-u$ contains a subtree $T'\cong T_0$ then there exists an ear $Q$ of $B$ such that $N_G(u)\cap V(B)\subseteq V(Q)$.

In fact, if there exists such a vertex $u$ such that $|N_G(u)\cap V(B)|\geq3$ and no ear of $Q$ contains $N_G(u)\cap V(B)$ then, it is easy to find three $(u, X)$-paths intersecting with each other only at $u$. Thus $B_1=G[V(B)\cup \{u\}]$ is the subdivision of some simple 3-connected graph with $t(B_1)>t(B)$, a contradiction to \eqref{eq-choseB}. Claim 1 is proved.

   \textbf{Claim 2.} For any $u\in V(H\cup T)$, if $H\cup T-u$ contains a subtree $T'\cong T_0$ then $|N_G(u)\cap V(B)|\leq 3$.

   Suppose, to the contrary, that $u$ is such a vertex such that $|N_G(u)\cap V(B)|\geq 4$. Then by Claim 1,  there exists an ear $Q$ such that $N_G(u)\cap V(B)\subseteq V(Q)$.
    Denote $Q=u_1u_2\ldots u_q$. Let $a=\min\{i\mid u_i\in N_G(v)\}$ and $b=\max\{i\mid u_i\in N_G(v)\}$. If $b-a\geq 3$ then let $B'=G[V(B-\{v_{a+1},\ldots, v_{b-1}\})\cup \{u\}]$ and we see that $B'$ is a subdivision of some simple 3-connected graph such that $t(B')=t(B)$ and $|B'|-|B|=1-(b-a-1)<0$, a contradiction to the \eqref{eq-choseB}. So $b-a\leq 2$ and Claim 2 is proved.


\textbf{Claim 3.} For any $v\in V(H\cup T)$, $|N_G(v)\cap V(B)|\leq 3$.

  Suppose, to the contrary, that there exists $v\in V(H\cup T)$ such that $|N_G(v)\cap V(B)|\geq4$. By Claim 2, $v\in V(T)$. Apply Lemma \ref{lem-main} and by Claim 2, we see that $N_G(H)\cap V(T)=\emptyset$. Then again by Claim 2, $\delta(H)\geq m+3-3= m$. It follows that $H$ contains $T'\cong T_0$. Then by Claim 2, $|N_G(v)\cap V(B)|\leq 3$, a contradiction. The claim is proved.

\textbf{Claim 4.} $Y\neq\emptyset$.

Suppose, to the contrary, that $Y=\emptyset$. Then $B$ is 3-connected. Since $G$ is 3-connected, for any $u\notin V(B)$, there exist three $(u,B)$-paths $P_1,P_2,P_3$ intersecting with each other only at $u$. Furthermore, we may assume that $P_1, P_2, P_3$ are such paths such that $\sum_{i=1}^3|P_i|$ is as small as possible. Let $B'=G[V(B)\cup V(P_1\cup P_2\cup P_3)]$. Then $t(B')>t(B)$ and $B'$ is still the subdivision of some simple 3-connected graph by the minimality of $P_i$'s. Let $h$ be a vertex of $G-V(B')$ with minimum degree. If $\delta(G-V(B'))<m-1$ then $|N_G(h)\cap V(B')|\geq d_G(h)-\delta(G-V(B'))\geq5$.  For $i=1,2,3$, let $v_i\in N_G(h)\cap V(B')$ such that $v_i\notin \{v_1,\ldots, v_{i-1}\}$ and the distance from $v_i$ to $B$ in $B'-\{v_1,\ldots, v_{i-1}\}$ is as small as possible. Then there exist three $(h, V(B))$-paths in $G[V(B')\cup \{h\}]$ using edges $hv_1, hv_2, hv_3$ and avoiding two neighbors of $h$ in $\bigcup_{i=1}^3 P_i$, a contradiction to the choice of $P_i$'s. Thus $\delta(G-V(B'))\geq m-1$ and then there exists a subgraph $T'\cong T_0$ in $G-V(B')$ such that $t(B')>t(B)$, a contradiction to \eqref{eq-choseB}.
 Claim 3 is proved.

By Claim 3 and the fact $G$ is 3-connected, there exists a $(Y, B)$-path $P$ edge-disjoint with $B$ such that the two ends of $P$ lie in no ear of $B$.
Furthermore, we may assume $B$, $P$ are such subgraphs and paths such that \eqref{eq-choseB} holds and then $|P|$ is as small as possible. Let $B'=G[V(B)\cup V(P)]$ and we will show that $\delta(G-V(B'))\geq m-1$.

Suppose, to the contrary that $\delta(G-V(B'))\leq m-2$. Let $h$ be a vertex of $G-V(B')$ with minimum degree and then $|N_G(h)\cap V(B')|\geq 5$. Assume $P$ is a $(u,v)$-path such that $u\in Y$. Let $Q_1$ be an ear of $B$ such that $u\in V(Q_1)$. By Claim 3, $|N_G(h)|\cap V(P)|\geq |N_G(h)\cap V(B')|-3\geq2$. Let $v_1, v_2\in N_G(h)\cap V(P)$ closest to $u,v$ in $P$, respectively. By the minimality of $P$, $|N_G(h)\cap V(P)|\leq 3$ and thus $|N_G(h)\cap V(B)|\geq 2$.
If $N_G(h)\cap V(B)=V(Q_1)\setminus Y$ then let $B_1=G[V(B)\cup \{h\}\setminus (V(Q_1)\cap Y)]$ and $P_1=hv_2Pv$. Then $t(B_1)=t(B)$, $|B_1|\leq |B|$ and $|P_1|<|P|$, a contradiction to the choice of $B$ and $P$. So we may assume there exists $w\in N_G(h)\cap (V(B)\setminus (V(Q_1)\setminus Y))$. If $w\notin V(Q_1)$
then let $P_1=uPv_1hw$ and if $w\in V(Q_1)$ then let $P_1=whv_2Pv$. Then $P_1$ is a $(Y,B)$-path such that no ear contains the two end of $P$. By the choice of $P$, $|P_1|\geq |P|$. Thus $N_G(h)\cap (V(P)\setminus V(B))=\{v_1,v_2\}$ and then $|N_G(h)\cap V(B)|\geq3$. By Claim 1, there exists an ear $Q_2$ of $B$ such that $N_G(h)\cap V(B)\subseteq V(Q_2)$. By Claim 2, $|N_G(h)\cap V(B)|=3$. Let $v_3$ be the middle neighbor of $h$ on $Q_2$ and let $B_2=G[V(B-v_3)\cup \{h\}]$. Then $B_2$ is also a subdivision of some simple 3-connected graph and $t(B_2)=t(B)$ and $|B_2|=|B|$. If $Q_2\neq Q_1$ then let $P_2=hv_1Pu$ and if $Q_2=Q_1$ then let $P_2=hv_2Pv$. In either case, $P_2$ is a $(Y', B_2)$-path shorter than $P$ (where $Y'=Y\cup \{h\}\setminus \{v_3\}$), a contradiction. Hence, $\delta(G-V(B'))\geq m-1$.

It follows that there exists a subgraph $T'\cong T_0$ in $G-V(B')$ such that $t(B')>t(B)$, a contradiction to \eqref{eq-choseB}. The proof is completed.
\end{proof}

\end{document}